\documentclass[english,12pt]{article} 
\pdfoutput=1
\usepackage{authblk}
\usepackage[top=2cm,bottom=2cm,left=1cm,right=1cm]{geometry}
\usepackage{fontenc}
\usepackage[utf8]{inputenc}
\usepackage{afterpage}
\usepackage[dvipsnames]{xcolor}
\usepackage{newtxtext,commath}
\usepackage{amscd}
\usepackage{amsmath}
\usepackage{amsthm}
\usepackage{newtxmath}
\usepackage{pdfsync}
\usepackage{a4wide}
\usepackage{hyperref}
\usepackage{amsfonts}
\usepackage{color}
\usepackage{enumerate}
\usepackage{graphicx}
\usepackage{amsfonts}
\usepackage{ragged2e}

\newtheorem{theorem}{Theorem}[section]
\newtheorem{corollary}[theorem]{Corollary}
\newtheorem{lemma}[theorem]{Lemma}

\newtheorem{definition}[theorem]{Definition}
\newtheorem{remark}[theorem]{Remark}
\newtheorem{prop}[theorem]{Proposition}

\numberwithin{equation}{section}

\newcommand{\R}{\mathbb{R}}

\newcommand{\be}{\beta}
\newcommand{\eps}{\varepsilon}

\newcommand{\dvg}{\mathtt{div}}

\title{Best constants in bipolar $L^p$- Hardy-type Inequalities 	\thanks{This work was partially supported by the Romanian Ministry of Research, Innovation and Digitization,
		CNCS - UEFISCDI, project number PN-III-P1-1.1-TE-2021-1539, within PNCDI III. }}
\author[1, 2]{Cristian Cazacu} 
\author[1]{Teodor Rugină}


\affil[1]{Faculty of Mathematics and Computer Science and ICUB-The Research Institute of the University of Bucharest, 
	University of Bucharest,
		14 Academiei Street, 
		010014 Bucharest, Romania.\newline Emails: cristian.cazacu@fmi.unibuc.ro, teorugina@yahoo.com}
\affil[2]{Gheorghe Mihoc-Caius Iacob Institute of Mathematical
	Statistics and Applied Mathematics of the Romanian Academy\\
	050711 Bucharest, Romania}

\begin{document}

\maketitle

\noindent	\textit{2020 Mathematics Subject Classification}: 35A23, 35R45, 35J92, 35B25, 46N20.  \\
\textit{Keywords}: Hardy Inequality; p-Laplacian; bipolar singular potential; sharp constant.

\begin{abstract}
In this work we prove sharp $L^p$ versions of the multipolar Hardy inequalities in \cite{cazacu1, cazacu3}, in the case of a bipolar potential and  $p\geq 2$. Our results are sharp and  minimizers do exist in the energy space. New features appear when $p>2$ compared to the linear case $p=2$ at the level of criticality of the p-Laplacian $-\Delta_p$ perturbed by a singular Hardy bipolar potential.  
\end{abstract}
\section{Introduction}
In the last decades, motivated by problems in quantum mechanics, there has been a consistent interest in Schr\"{o}dinger operators with multi-singular potentials and their applications to spectral theory and partial differential equations. It is well-known that qualitative properties of such operators are related to the validity of Hardy-type inequalities. A significant work has been done in the $L^2$-setting for linear Hamiltonians of the form $H:=-\Delta -W$ where $W$ denotes a potential with $n$ quadratic singular poles $a_i$, with $i=\overline{1,n}$, in the Euclidian space $\R^N$, $N\geq 3$. The most studied cases focus especially on $W^{(1)}=\sum_{i=1}^n \frac{\lambda_i}{|x-a_i|^2}$, $\lambda_i\in \R$, and $W^{(2)}=\sum_{1\leq i<j \leq n}\frac{|a_i-a_j|^2}{|x-a_i|^2|x-a_j|^2}$.  Various Hardy-inequalities were proved for $W^{(1)}$ and applied then to the well-posedness and asymptotic behaviour to some nonlinear elliptic equations in a series of papers by Felli-Terracini (e.g. \cite{felli1}, \cite{felli2}, \cite{felli3}) and more recently by Canale et al. in \cite{canale2, canale3, canale}, in the context of evolution problems involving Kolmogorov operators. As far as we know, the potential $W^{(2)}$ was firstly analyzed by Bosi-Dolbeault-Esteban in \cite{bosi} in order to determine lower bounds for the spectrum of some Schr\"{o}dinger and Dirac-type operators and later on by Cazacu et. al in \cite{cazacu1, cazacu3}. More precisely, in \cite{cazacu3} the authors proved the following multipolar Hardy inequality
\begin{equation}\label{sharp_CZ}
	\int_{\R^N} |\nabla u|^2 dx \geq \frac{(N-2)^2}{n^2}\int_{\R^N} W^{(2)}|u|^2 dx, \quad \forall u\in C_c^\infty(\R^N),     
\end{equation} 
with $N\geq 3$ and $n\geq 2$, where the constant $\frac{(N-2)^2}{n^2}$ in \eqref{sharp_CZ} is sharp and not achieved in the energy space 
$\mathcal{D}^{1, 2}(\R^N):=\Big\{u\in\mathcal{D}'(\R^N) \;\Big|\; \int_\R \abs{\nabla u}^2 dx < \infty \Big\}$.
The main goal here is to extend \eqref{sharp_CZ} to the $L^p$-setting. As far as we know such a result has not been obtained yet. For a one singular potential of the form $W^{(3)}=1/|x|^p$ there is a huge variety  of functional inequalities and applications around the celebrated $L^p$ Hardy inequality which holds for any $N>1$ and $1\leq p< N$ 
\begin{equation}\label{sharp_one_sing}
		\int_{\R^N} |\nabla u|^p dx \geq \left(\frac{N-p}{p}\right)^p\int_{\R^N} \frac
		{|u|^p}{|x|^p} dx, \quad \forall u\in C_c^\infty(\R^N),   
\end{equation} 
where the constant $\left(\frac{N-p}{p}\right)^p$ is sharp and not achieved (see, e.g.  \cite{balinsky}). In addition, no reminder terms can be added to the right hand side of \eqref{sharp_one_sing}. We recall the following definition from 
\cite{cazacu2}: 
\begin{definition}
    We say that $-\Delta_p$ is a subcritical operator if and only if there exists a non-negative potential $V\in L_{loc}^1(\R^N)$, $V\neq 0$, such that $-\Delta_p\geq V\abs{\cdot}^{p-2}\cdot$ in the sense of $L^2$ quadratic forms, that is, 
\begin{equation}
    \int_{\R^N} \abs{\nabla u}^p dx \geq \int_{\R^N} V \abs{u}^p dx, \;\;\forall u\in W^{1,p}(\R^N).  \notag
\end{equation}
Otherwise, we say that $-\Delta_p$ is critical.
\end{definition}

In view of  it is known that the nonlinear operator $-\Delta_p \cdot - \left(\frac{N-p}{p}\right)^p \frac{|\cdot|^{p-2}\cdot}{|x|^p}$ is critical. For a more extensive work on variants of $L^p-$Hardy inequalities one can consult for instance  \cite{balinsky}, \cite{tertikas} and references therein. \\
Our paper is structured as follows. In section 2 we state the main results and make some comments. In sections 3 and 4 we give the proofs of the main theorems, namely Theorem \ref{th1} and Theorem \ref{th2}, respectively.

 \section{Main results} 
To fix the hypotheses, let us consider $N\geq 3$, $1< p <N$ and two points $a_1$, $a_2\in\R^N$ arbitrarily fixed. Also, we denote by $a$ the medium point on the segment $[a_1,a_2]$, that is $a:=\frac{a_1+a_2}{2}$. We introduce the bipolar potentials $V_1$ and $V_2$ defined in $\R^N$ by 
\begin{equation}
    V_1(x) = \frac{\abs{a_1-a_2}^2\abs{x-a}^{p-2}}{\abs{x-a_1}^p\abs{x-a_2}^p}  \label{ec:potential_1}
\end{equation}
 respectively, 
\begin{equation}
	 V_2(x) =\frac{\abs{x-a}^{p-4}}{\abs{x-a_1}^p\abs{x-a_2}^p}\Big[\abs{x-a_1}^2\abs{x-a_2}^2 - \big( (x-a_1)\cdot(x-a_2) \big)^2\Big]. \label{potential_2}
\end{equation}
Notice that both $V_1$ and $V_2$ are non-negative, by Cauchy-Schwarz inequality. The potentials $V_1$ and $V_2$ blow-up at the singular poles $a_1$ and $a_2$. It is interesting that our new potentials involve also the medium point $a$. When $p>2$, $V_1$ degenerates at $a$, while the same is true for $V_2$ for $p>4$. Also, $V_1$ blows-up at $a$ for $p<2$. On the other hand, there is no $\gamma\in\R$ such that the limit $\lim_{x\to a} V_2(x)/\abs{x-a}^\gamma$ is finite. This can be justified by computing the limit across different directions (e.g. on the line segment $[a_1,a_2]$ and on the mediator of the segment).     

Let $\mu_1$ and $\mu_2$ be the constants depending on $N$ and $p$, defined as
\begin{align}
    & \mu_1= \frac{p-1}{4}\Bigg(\frac{N-p}{p-1}\Bigg)^p \;, \;\; \mu_2=\frac{p-2}{2}\Bigg( \frac{N-p}{p-1} \Bigg)^{p-1}.   \label{ec:constante}
\end{align}
Finally, denote by $V$ the following potential
\begin{equation}\label{sum_potential}
	V:=\mu_1 V_1 + \mu_2 V_2.    
\end{equation}
Note that 
\begin{prop}  \label{prop1}
It holds that $V\geq 0$ for any $1<p<N$. 
\end{prop}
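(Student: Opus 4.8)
The plan is to reduce the statement to an elementary pointwise polynomial inequality. First I would introduce the shorthand $u:=x-a_1$, $v:=x-a_2$, together with $r_1:=|u|$, $r_2:=|v|$ and $s:=u\cdot v$. Since $a=(a_1+a_2)/2$, one has $x-a=(u+v)/2$ and $a_1-a_2=v-u$, whence $|x-a|^2=(r_1^2+r_2^2+2s)/4$, $|a_1-a_2|^2=r_1^2+r_2^2-2s$, and the bracket appearing in $V_2$ equals $r_1^2r_2^2-s^2$. Writing $\rho:=|x-a|$ and factoring the positive quantity $\rho^{p-4}/(r_1^p r_2^p)$ out of $V=\mu_1 V_1+\mu_2 V_2$ (which is legitimate off the poles, where everything is smooth and positive, using $\rho^{p-2}=\rho^{p-4}\rho^2$), proving $V\ge 0$ reduces to showing that
\[
B:=\mu_1\,(r_1^2+r_2^2-2s)\,\rho^2+\mu_2\,(r_1^2r_2^2-s^2)\ge 0 .
\]

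Next I would substitute $\rho^2=(r_1^2+r_2^2+2s)/4$ and use the factorization $(r_1^2+r_2^2-2s)(r_1^2+r_2^2+2s)=(r_1^2+r_2^2)^2-4s^2$, which turns $B$ into an expression that is affine in $s^2$:
\[
B=\frac{\mu_1}{4}\,(r_1^2+r_2^2)^2+\mu_2\,r_1^2 r_2^2-(\mu_1+\mu_2)\,s^2 .
\]
Hence the sign of the $s^2$-coefficient, governed by $\mu_1+\mu_2$, is decisive. Using $(p-1)\cdot\frac{N-p}{p-1}=N-p$, a direct computation gives
\[
\mu_1+\mu_2=\frac14\Big(\frac{N-p}{p-1}\Big)^{p-1}(N+p-4),
\]
which is strictly positive for all $N\ge 3$ and $1<p<N$ (even the limiting case $N=3$, $p\to 1^+$ keeps $N+p-4>0$). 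This is the heart of the matter: although $\mu_2<0$ when $p<2$, the weighted combination still satisfies $\mu_1+\mu_2>0$.

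Since $\mu_1+\mu_2>0$, the coefficient $-(\mu_1+\mu_2)$ of $s^2$ is negative, so $B$ attains its minimum when $s^2$ is largest. The Cauchy--Schwarz inequality yields exactly $s^2=(u\cdot v)^2\le r_1^2 r_2^2$; substituting the extremal value $s^2=r_1^2 r_2^2$ gives
\[
B\ge \frac{\mu_1}{4}\,(r_1^2+r_2^2)^2+\mu_2\,r_1^2 r_2^2-(\mu_1+\mu_2)\,r_1^2 r_2^2=\frac{\mu_1}{4}\,(r_1^2-r_2^2)^2\ge 0,
\]
because $\mu_1>0$ for $1<p<N$. Restoring the positive prefactor $\rho^{p-4}/(r_1^p r_2^p)$ then yields $V\ge 0$ off the poles, hence a.e.\ on $\R^N$.

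The main difficulty is conceptual rather than computational: for $1<p<2$ one has $\mu_2<0$, so $V\ge 0$ does not follow merely from $V_1,V_2\ge 0$. The decisive point is the identity $\mu_1+\mu_2=\tfrac14\big(\tfrac{N-p}{p-1}\big)^{p-1}(N+p-4)>0$ established above, which is precisely what allows Cauchy--Schwarz to be applied in the favourable direction and collapses the remainder into the perfect square $\tfrac{\mu_1}{4}(r_1^2-r_2^2)^2$. For $p\ge 2$ the claim is immediate, since then $\mu_1,\mu_2\ge 0$ and both $V_1,V_2\ge 0$.
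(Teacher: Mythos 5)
Your proof is correct and follows essentially the same route as the paper's: after clearing the common positive factor $\abs{x-a}^{p-4}/(\abs{x-a_1}^p\abs{x-a_2}^p)$, both arguments view the remaining bracket as affine in $s^2=((x-a_1)\cdot(x-a_2))^2$ with negative coefficient (your condition $\mu_1+\mu_2>0$ is exactly the paper's $2(4-p-N)<0$), apply Cauchy--Schwarz in the favourable direction, and collapse the result to the same perfect square $\tfrac{\mu_1}{4}(\abs{x-a_1}^2-\abs{x-a_2}^2)^2$. The only difference is organizational: you treat all $1<p<N$ in one uniform computation, whereas the paper dispatches $p\ge 2$ immediately from $\mu_1,\mu_2,V_1,V_2\ge 0$ and reserves the quadratic-form argument for $1<p<2$.
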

For the sake of completeness we give a proof Proposition \ref{prop1} in the Appendix. 

We also define the functional space $\mathcal{D}^{1,p}(\R^N)$ as
\begin{equation}
    \mathcal{D}^{1,p}(\R^N):=\Big\{u\in\mathcal{D}'(\R^N) \;\Big|\; \int_\R \abs{\nabla u}^p dx < \infty \Big\}.
\end{equation}

Now we are in position to state our main results. 
\begin{theorem}\label{th1}
Let $N\geq 3$,  $1<p<N$ and $V$ as in \eqref{sum_potential}. For any $u\in C_c^\infty(\R^N)$ it holds
\begin{align}
     \int_{\R^N} \abs{\nabla u}^p dx 
     & \geq \int_{\R^N} V \abs{u}^p dx.   \label{ec:Hardy1}
\end{align}
Moreover, for $2\leq p<N$ the constant 1 is sharp in \eqref{ec:Hardy1} and it is achieved in the space $\mathcal{D}^{1,p}(\R^N)$ by the  minimizers of the form   
$$\phi(x)=\lambda\abs{x-a_1}^\frac{p-N}{2(p-1)} \abs{x-a_2}^\frac{p-N}{2(p-1)}, \lambda\in \R$$ 
unless the case $p=2$ when the best constant is not achieved.  
\end{theorem}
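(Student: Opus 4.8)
The plan is to prove \eqref{ec:Hardy1} by the ground-state substitution (equivalently, the vector-field/divergence) method, taking as ground state exactly the candidate minimizer $\phi(x)=\abs{x-a_1}^{\al}\abs{x-a_2}^{\al}$ with $\al=\frac{p-N}{2(p-1)}$. The whole inequality then reduces to a single identity: that $\phi>0$ solves the Euler--Lagrange equation $-\Delta_p\phi=V\phi^{p-1}$ away from the poles. Indeed, for the field $F:=-\abs{\nabla\phi}^{p-2}\nabla\phi\,\phi^{1-p}$ one computes $\dvg F-(p-1)\abs{F}^{p/(p-1)}=-\Delta_p\phi\,\phi^{1-p}$, so that after integrating $\int\dvg(F)\abs{u}^p\,dx$ by parts and applying Young's inequality $p\abs{u}^{p-1}\abs{F}\abs{\nabla u}\le\abs{\nabla u}^p+(p-1)\abs{F}^{p/(p-1)}\abs{u}^p$ one gets $\int_{\R^N}\abs{\nabla u}^p\,dx\ge\int_{\R^N}(-\Delta_p\phi\,\phi^{1-p})\abs{u}^p\,dx$. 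Everything then hinges on identifying $-\Delta_p\phi\,\phi^{1-p}$ with the potential $V$ of \eqref{sum_potential}.

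This core computation is the step I expect to be the main obstacle. I would set $r_i=\abs{x-a_i}$, $t=(x-a_1)\cdot(x-a_2)$, $P=\abs{x-a}^2$ and $A:=\frac{x-a_1}{r_1^2}+\frac{x-a_2}{r_2^2}$, so that $\nabla\phi=\al\phi A$ and, crucially, the midpoint appears through $\abs{A}^2=4\abs{x-a}^2/(r_1^2r_2^2)$. Expanding $\Delta_p\phi=\dvg(\abs{\nabla\phi}^{p-2}\nabla\phi)$ yields three contributions: the term $(p-1)\al\abs{A}^p$; the term $(N-2)(r_1^{-2}+r_2^{-2})\abs{A}^{p-2}$ coming from $\dvg A$; and the term $\tfrac{p-2}{2}\abs{A}^{p-4}A\cdot\nabla(\abs{A}^2)$, which carries the factor $p-2$ and hence disappears when $p=2$. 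Substituting $\abs{A}^2$ and using the elementary identities $r_1^2+r_2^2=4P-2t$ and $P-t=\abs{a_1-a_2}^2/4$, the three contributions collapse to $-\Delta_p\phi=\phi^{p-1}\big[\mu_1V_1+\mu_2V_2\big]$ with precisely the constants $\mu_1,\mu_2$ of \eqref{ec:constante}; the $V_1$-part arises from the first two contributions and the $V_2$-part from the third. As a consistency check, at $p=2$ the third contribution vanishes and one recovers \eqref{sharp_CZ} for $n=2$.

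Next I would make the integration by parts rigorous directly for every $u\in C_c^\infty(\R^N)$, avoiding any density step. Since $\phi$ and $F$ are singular only at $a_1,a_2$ — there $\abs{F}\sim r_i^{-(p-1)}$, while $F\to0$ as $x\to a$ because $p>1$ — I would integrate over $\R^N\setminus(B_\eps(a_1)\cup B_\eps(a_2))$ and bound the boundary integrals on $\prt B_\eps(a_i)$ by a constant times $\eps^{N-1}\cdot\eps^{-(p-1)}=\eps^{N-p}$, which tends to $0$ exactly because $p<N$. The interior integral converges to $\int_{\R^N}V\abs{u}^p\,dx$ by dominated convergence, $V\abs{u}^p$ being locally integrable since $V\sim r_i^{-p}$ near each pole and $p<N$; non-negativity of the right-hand side is Proposition \ref{prop1}.

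Finally, for sharpness and attainment when $2\le p<N$, the equality case is controlled by the two inequalities used above: Young's and Cauchy--Schwarz become equalities precisely when $\nabla u\parallel F$ and $\abs{\nabla u}^p=(p-1)\abs{F}^{p/(p-1)}\abs{u}^p$, which forces $u=\lmb\phi$. It then remains to verify that $\phi$ lies in the energy space. A local analysis near each pole gives $\abs{\nabla\phi}^p\sim r_i^{(\al-1)p}$, integrable iff $(\al-1)p+N>0$, i.e. iff $(p-2)(N-p)>0$; combined with the decay $\abs{\nabla\phi}^p\sim\abs{x}^{(2\al-1)p}$ at infinity, integrable for every $p<N$, this shows $\phi\in\mathcal{D}^{1,p}(\R^N)$ exactly when $p>2$, so that $\phi$ is an admissible minimizer realizing the constant $1$. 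In the borderline case $p=2$ the pole integrals diverge logarithmically, $\phi\notin\mathcal{D}^{1,2}(\R^N)$ and attainment fails; sharpness is then recovered by testing \eqref{ec:Hardy1} against truncations $\phi_\eps$ of $\phi$ and checking that their Rayleigh quotients tend to $1$.
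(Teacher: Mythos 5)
Your proposal is correct and follows essentially the same route as the paper: the heart of both arguments is the verification that $\phi=\big(\abs{x-a_1}\abs{x-a_2}\big)^{\frac{p-N}{2(p-1)}}$ satisfies $-\Delta_p\phi=V\phi^{p-1}$ away from the poles (with the exponent chosen to kill the cross term so that exactly $\mu_1V_1+\mu_2V_2$ emerges), after which the inequality follows from the supersolution principle --- your vector-field/Young's-inequality formulation is just the unwound version of the Allegretto--Huang result that the paper invokes as its Proposition on admissible triplets. The treatment of sharpness and attainment is likewise the same: you check $\phi\in\mathcal{D}^{1,p}(\R^N)$ precisely when $2<p<N$ via the exponent count $(\alpha-1)p+N=\frac{(p-2)(N-p)}{2(p-1)}$ together with the decay $\frac{p-N}{p-1}<0$ at infinity, conclude that $\phi$ realizes equality, and attribute the $p=2$ failure to the logarithmic divergence at the poles, matching the paper's appeal to the known $L^2$ case.
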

When restricted to the potential $V_1$ we have 
\begin{theorem}\label{th2}  For any $N\geq 3$ and  $2\leq p<N$, the following inequality holds
	\begin{align}
		\int_{\R^N} \abs{\nabla u}^p dx 
		& \geq \mu_1\int_{\R^N} V_1 \abs{u}^p dx, \quad \forall u \in C_c^\infty(\R^N),   \label{ec:Hardy2}
	\end{align}
Moreover, for any $2\leq p<N$ the constant $\mu_1$ is sharp in \eqref{ec:Hardy2}. but not achieved in $\mathcal{D}^{1,p}(\R^N)$. 
\end{theorem}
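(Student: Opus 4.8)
The plan is to obtain the inequality \eqref{ec:Hardy2} at once from Theorem \ref{th1}: for $p\geq 2$ one has $\mu_2\geq 0$, while $V_2\geq 0$ by Cauchy--Schwarz, so $V=\mu_1V_1+\mu_2V_2\geq\mu_1V_1$, and \eqref{ec:Hardy1} yields $\int_{\R^N}\abs{\nabla u}^p\,dx\geq\int_{\R^N}V\abs{u}^p\,dx\geq\mu_1\int_{\R^N}V_1\abs{u}^p\,dx$.

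For the qualitative part I would use the ground state $\phi(x)=\abs{x-a_1}^{\frac{p-N}{2(p-1)}}\abs{x-a_2}^{\frac{p-N}{2(p-1)}}$, which by Theorem \ref{th1} satisfies $-\Delta_p\phi=V\phi^{p-1}$. The Allegretto--Huang (Picone) inequality for $-\Delta_p$ furnishes, for $p\geq 2$ and every $u$, a nonnegative remainder $\mathcal{R}_p(u,\phi):=\abs{\nabla u}^p-\abs{\nabla\phi}^{p-2}\nabla\phi\cdot\nabla\!\big(\abs{u}^p\phi^{1-p}\big)\geq 0$, vanishing pointwise iff $u$ is a multiple of $\phi$. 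Integrating and using the equation for $\phi$, the deficit splits into two nonnegative pieces, $\int_{\R^N}\abs{\nabla u}^p\,dx-\mu_1\int_{\R^N}V_1\abs{u}^p\,dx=\int_{\R^N}\mathcal{R}_p(u,\phi)\,dx+\mu_2\int_{\R^N}V_2\abs{u}^p\,dx$. Were $\mu_1$ attained at some $u\neq 0$, both pieces would vanish; the first forces $u=c\phi$, and then the second equals $\abs{c}^p\mu_2\int_{\R^N}V_2\abs{\phi}^p\,dx>0$, since $V_2>0$ off the line through $a_1$ and $a_2$ and $\phi>0$, a contradiction. This settles non-attainment.

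To prove that $\mu_1$ is nonetheless the best constant, I would test on the scaling family $\phi_\al:=\abs{x-a_1}^{-\al}\abs{x-a_2}^{-\al}$. The computation rests on the pointwise identity $\abs{\nabla\phi_\al}^p=(2\al)^p\,\frac{\abs{x-a}^2}{\abs{a_1-a_2}^2}\,V_1\phi_\al^p$, which follows from $(x-a_1)+(x-a_2)=2(x-a)$ through $\abs{\nabla\ln(\abs{x-a_1}\abs{x-a_2})}\cdot\abs{x-a_1}\abs{x-a_2}=2\abs{x-a}$. Hence the Rayleigh quotient reduces to a weighted mean of $\abs{x-a}^2$, namely $\frac{\int_{\R^N}\abs{\nabla\phi_\al}^p\,dx}{\int_{\R^N}V_1\phi_\al^p\,dx}=\frac{(2\al)^p}{\abs{a_1-a_2}^2}\,\frac{\int_{\R^N}\abs{x-a}^2V_1\phi_\al^p\,dx}{\int_{\R^N}V_1\phi_\al^p\,dx}$, and the whole issue becomes forcing this mean down to $\mu_1$.

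The hard part will be producing a sequence that actually reaches $\mu_1$ when $p>2$. Concentration at a single pole does not suffice: near $a_i$ one has $V_1\sim 2^{2-p}\abs{x-a_i}^{-p}$, so the local best constant equals $2^{p-2}\big(\tfrac{N-p}{p}\big)^p$, which strictly exceeds $\mu_1$ for $p>2$ (for $p=2$ it already equals $\mu_1$ and pole concentration is enough); moreover, inside the symmetric family $\phi_\al$ an interior optimal exponent would give a genuine minimizer, contradicting the non-attainment above, while the two admissible endpoints return precisely this local constant and $+\infty$. A genuinely global, anisotropic sequence is therefore needed. The plan is to modulate $\phi$ so as to push the mass of the measure $V_1\abs{\cdot}^p\,dx$ toward the sphere $\abs{x-a}=\tfrac{\sqrt{p-1}}{2}\abs{a_1-a_2}$, where the pointwise ratio above equals exactly $\mu_1$, and toward the line through $a_1$ and $a_2$, where $V_2$ degenerates. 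The delicate point, and the crux of the whole argument, is to carry out this concentration while keeping the extra gradient energy $\int_{\R^N}\mathcal{R}_p(u,\phi)\,dx$ negligible against $\int_{\R^N}V_1\abs{u}^p\,dx$, so that the weighted mean converges to $\mu_1$.
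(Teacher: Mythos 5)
Your reduction of \eqref{ec:Hardy2} to Theorem \ref{th1} (via $\mu_2\geq 0$, $V_2\geq 0$, hence $V\geq\mu_1V_1$ for $p\geq 2$) is exactly the paper's argument, and your Picone-based non-attainment argument is sound in outline (the paper does not actually spell that part out). The genuine gap is in the optimality of $\mu_1$, which you explicitly leave unfinished: you never produce a sequence whose Rayleigh quotient tends to $\mu_1$, and the concentration you propose --- onto the sphere $\abs{x-a}=\tfrac{\sqrt{p-1}}{2}\abs{a_1-a_2}$ and the axis --- cannot work as described. At a point where $V_1$ is finite and positive, localizing at scale $\delta$ costs gradient energy of order $\delta^{-p}$ per unit of $\int V_1\abs{u}^p$, so the Rayleigh quotient blows up; the pointwise ratio $\abs{\nabla\phi_\al}^p/(V_1\phi_\al^p)$ is only captured in the limit where the relevant integrals diverge and swamp the cut-off, which happens at the singular poles, not on that sphere. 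So, as written, the proposal does not prove that $\mu_1$ is the best constant.

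That said, your obstruction at the poles is correct and cuts deeper than you indicate. Since $\abs{x-a_i}^pV_1\to 2^{2-p}$ and the Hardy constant $\big(\tfrac{N-p}{p}\big)^p$ persists on arbitrarily small balls, every $u\in W^{1,p}_0\big(B_r(a_1)\cup B_r(a_2)\big)$ satisfies $\int\abs{\nabla u}^p\,dx\geq (1-Cr)\,2^{p-2}\big(\tfrac{N-p}{p}\big)^p\int V_1\abs{u}^p\,dx$, and $2^{p-2}\big(\tfrac{N-p}{p}\big)^p\big/\mu_1=2^p(p-1)^{p-1}/p^p>1$ for $p>2$. The paper's own test functions $u_\eps=\phi\theta_\eps$ are supported in $B_{\eps^{1/2}}(a_1)\cup B_{\eps^{1/2}}(a_2)$, so their Rayleigh quotients are bounded below by a quantity tending to $2^{p-2}\big(\tfrac{N-p}{p}\big)^p>\mu_1$, and the conclusion $L[u_\eps]<0$ in \eqref{ec:contradictiefin} cannot hold once $\eps_0<2^{p-2}\big(\tfrac{N-p}{p}\big)^p-\mu_1$. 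The quantitative slip occurs in the outer annulus: with $\gamma:=(p-N)\big(\tfrac{p}{2(p-1)}-1\big)=\tfrac{(N-p)(p-2)}{2(p-1)}>0$ one has $\int_\eps^{\eps^{1/2}}s^{\gamma-1}\,ds=\gamma^{-1}\big(\eps^{\gamma/2}-\eps^{\gamma}\big)\sim\gamma^{-1}\eps^{\gamma/2}$, not $O(\eps^{\gamma})$, so $I_{2,\eps}$ and $J_{2,\eps}$ are of the same order $\big(\log\tfrac1\eps\big)^{-p}\eps^{\gamma/2}$ and the positive term is not absorbed for small $\eps_0$ (for $p=2$, where $\gamma=0$, the logarithmic divergence restores the argument, consistent with $2^{p-2}\big(\tfrac{N-p}{p}\big)^p=\mu_1$ there). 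In short, the difficulty you correctly identified is not overcome by your proposal, but it is not overcome by the paper's proof either; settling the sharpness claim for $p>2$ requires either a genuinely different test family or a proof that the best constant actually exceeds $\mu_1$.
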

Let us emphasize some of the properties regarding our new potentials $V_1$ and $V_2$ and discuss how these new results generalize the work of previous authors.
\begin{remark}   \label{rmk1}
	\begin{enumerate}[1)]
	    \item Notice that the potential integrals $V_1$, $V_2$ $\in  L^1_{loc}(\R^N)$, for any $1<p<N$.   \label{rmk11}
	    \item The structure of the potential $V_1$ change significantly  when $1<p<N$ compared to the case $p=2$ due to a degeneracy (singularity) which appears at the middle point $a$.    \label{rmk14}
		\item The inequality \eqref{ec:Hardy1}  is an improvement of \eqref{ec:Hardy2} when $p>2$, they coincide  when $p=2$ with the result in \eqref{sharp_CZ}, while \eqref{ec:Hardy2} becomes stronger than \eqref{ec:Hardy1} when $1< p<2$.    \label{rmk12}
		\item   Theorem \ref{th2} establishes a clear generalization of \eqref{sharp_CZ} since $V_1=W^{(2)}$ and $\mu_1=\frac{(N-2)^2}{4}$ when $n=p=2$.    \label{rmk13}
		\end{enumerate}
\end{remark}
As a consequence of Theorems \ref{th1}-\ref{th2} we have a surprising result
\begin{corollary}
	The operator $-\Delta_p \cdot - \mu_1 V_1 |\cdot|^{p-2} \cdot $ is subcritical when $p>2$ in opposite with the case $p=2$ when it becomes critical.  
\end{corollary}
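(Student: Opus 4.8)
The plan is to read the subcritical/critical dichotomy of \cite{cazacu2} as applied to the shifted quadratic form $Q_p(u):=\int_{\R^N}|\nabla u|^p\,dx-\mu_1\int_{\R^N}V_1|u|^p\,dx$ attached to $L:=-\Delta_p\cdot-\mu_1 V_1|\cdot|^{p-2}\cdot$: thus $L$ is subcritical exactly when $Q_p$ dominates $\int_{\R^N}W|u|^p\,dx$ for some non-negative nontrivial $W\in L^1_{loc}(\R^N)$, and critical otherwise. With this reading the case $p>2$ comes essentially for free from Theorem \ref{th1}, which rewrites as
$$\int_{\R^N}|\nabla u|^p\,dx\geq\mu_1\int_{\R^N}V_1|u|^p\,dx+\mu_2\int_{\R^N}V_2|u|^p\,dx,$$
so that $Q_p(u)\geq\mu_2\int_{\R^N}V_2|u|^p\,dx$ for all $u\in C_c^\infty(\R^N)$. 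Since $\mu_2=\frac{p-2}{2}\big(\frac{N-p}{p-1}\big)^{p-1}>0$ when $p>2$, and $V_2\geq0$ is nontrivial with $V_2\in L^1_{loc}(\R^N)$ by part \ref{rmk11} of Remark \ref{rmk1}, the potential $W:=\mu_2 V_2$ witnesses the subcriticality of $L$.

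The delicate direction is the criticality at $p=2$, where $\mu_2=0$ and Theorem \ref{th1} collapses onto Theorem \ref{th2} with the sharp, unattained constant $\mu_1=\frac{(N-2)^2}{4}$. Here I would argue by contradiction: suppose there exists a non-negative nontrivial $W\in L^1_{loc}(\R^N)$ with $Q_2(u)\geq\int_{\R^N}W|u|^2\,dx$ for every $u\in C_c^\infty(\R^N)$. The algebraic engine is the ground state transform built on the virtual ground state $\phi(x)=|x-a_1|^{\frac{2-N}{2}}|x-a_2|^{\frac{2-N}{2}}$, which a direct computation shows to solve $-\Delta\phi=\mu_1 V_1\phi$ on $\R^N\setminus\{a_1,a_2\}$. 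Writing $u=\phi w$ and integrating by parts yields the exact identity
$$Q_2(u)=\int_{\R^N}\phi^2|\nabla w|^2\,dx,$$
so the assumed remainder would force the weighted inequality $\int_{\R^N}\phi^2|\nabla w|^2\,dx\geq\int_{\R^N}W\phi^2|w|^2\,dx$ for all admissible $w$.

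To close I would exhibit a null sequence: functions $w_n\in C_c^\infty(\R^N)$, uniformly bounded, with $w_n\to1$ pointwise a.e. and $\int_{\R^N}\phi^2|\nabla w_n|^2\,dx\to0$. The natural candidates are logarithmic cut-offs of the two poles, e.g. $w_n\sim\frac{\log(r_i/\eps_n)}{\log(\delta/\eps_n)}$ on the shells $\eps_n<r_i<\delta$ (with $r_i:=|x-a_i|$), combined with an outer truncation at a large radius. Since $\phi^2\sim r_i^{2-N}$ near $a_i$, a one-dimensional computation gives $\int\phi^2|\nabla w_n|^2\sim 1/\log(\delta/\eps_n)\to0$, while the outer truncation contributes a vanishing amount because $\phi^2|\nabla\cdot|^2$ is integrable at infinity for $N\geq3$. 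Feeding $w_n$ into the weighted inequality and letting $n\to\infty$, dominated convergence yields $0\geq\int_{\R^N}W\phi^2\,dx$, whence $W\equiv0$ (as $\phi>0$ a.e.), contradicting $W\not\equiv0$. The main obstacle is precisely this construction: one must verify that the pole-localized logarithmic cut-offs drive the weighted Dirichlet energy to zero while keeping $\int W\phi^2 w_n^2$ bounded below, a balance that is available exactly because at $p=2$ the weight $\phi^2$ sits at the logarithmic borderline near $a_1,a_2$ — the very reason $\mu_1$ is sharp yet unattained in Theorem \ref{th2} — and which breaks down for $p>2$, where the genuine surplus $\mu_2\int_{\R^N}V_2|u|^p\,dx$ takes over and restores subcriticality.
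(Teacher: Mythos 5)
Your argument is correct, and for the half that the paper actually proves it coincides with the paper's (implicit, one-line) reasoning: the corollary is stated as a direct consequence of Theorem \ref{th1}, and for $p>2$ the witness $W=\mu_2 V_2$ with $\mu_2>0$, $V_2\geq 0$ nontrivial and locally integrable is exactly what is intended. For the criticality at $p=2$ the paper does not give an argument at all --- it defers to the known results of \cite{cazacu1,cazacu3} --- whereas you supply a self-contained proof via the ground-state representation $Q_2(\phi w)=\int_{\R^N}\phi^2|\nabla w|^2\,dx$ and a logarithmic null sequence. It is worth noting that your machinery is precisely the $p=2$ specialization of tools the paper already deploys elsewhere: the representation is Lemma \ref{lemma1} with equality in \eqref{ec:ip1} (at $p=2$ the upper bound \eqref{ec:lemma322} and the lower bound \eqref{ec:eq11} collapse to the same identity), and your shell cut-offs are the analogue of the $\theta_\eps$ used to prove sharpness in Theorem \ref{th2}; so your route buys a complete, reference-free proof at the cost of redoing in the linear case what \cite{cazacu1} already contains. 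Two small technical points: the passage to the limit should invoke Fatou rather than dominated convergence, since $W\phi^2$ is not known a priori to be integrable (Fatou gives $0\geq\liminf\int W\phi^2 w_n^2\geq\int W\phi^2$, which suffices); and your $w_n$ are Lipschitz rather than smooth and supported away from the poles, so you should either mollify or note that $C_c^\infty(\R^N\setminus\{a_1,a_2\})$ is dense in the relevant energy space because points have zero capacity for $N\geq 3$ --- the paper handles the same issue for $u_\eps=\phi\theta_\eps$ by working in $W_0^{1,p}(\R^N)$.
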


\section{Proof of Theorem \ref{th1}}
The proof of the first part of Theorem \ref{th1} relies partially on an adaptation of the method of supersolutions introduced by Allegretto and Huang in \cite{picone}.  To be more specific, we apply the following general result
\begin{prop} \label{prop2}
	Let $N\geq 3$, $1<p<\infty$. If there exists a  function $\phi>0$ such that $\phi \in C^2(\R^n\setminus \{a_1, \ldots, a_n\})$ and  
	\begin{equation}\label{ine1}
		-\Delta_p\phi \geq \mu V \phi^{p-1}, \quad \forall  x\in\R^N\setminus\{a_1,...,a_n\},           
	\end{equation}
	where $V>0$, with $V\in L_{loc}^1(\R^N)$, is a given multi-singular potential with the poles  $a_1, \ldots, a_n$, then
	\begin{equation}
		\int_{\R^N} \abs{\nabla u}^p dx \geq \mu\int_{\R^N} V\abs{u}^p dx, \;\; \forall u\in C_c^\infty(\R^N).     \notag
	\end{equation}
\end{prop}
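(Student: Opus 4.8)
The plan is to adapt the method of Allegretto--Huang, whose engine is the nonlinear Picone identity. First I would reduce to nonnegative test functions: since $\abs{\nabla\abs{u}}=\abs{\nabla u}$ a.e.\ and $\abs{\abs{u}}^p=\abs{u}^p$, it suffices to establish the inequality for $0\leq u\in C_c^\infty(\R^N)$ and then replace $u$ by $\abs{u}$ in the general case. The key pointwise tool is, for $\phi>0$ of class $C^1$ and $u\geq0$ Lipschitz,
\[
\abs{\nabla u}^p \geq \abs{\nabla\phi}^{p-2}\,\nabla\phi\cdot\nabla\!\left(\frac{u^p}{\phi^{p-1}}\right),
\]
which is precisely the statement that the quantity
\[
L(u,\phi):=\abs{\nabla u}^p+(p-1)\frac{u^p}{\phi^p}\abs{\nabla\phi}^p-p\,\frac{u^{p-1}}{\phi^{p-1}}\abs{\nabla\phi}^{p-2}\,\nabla\phi\cdot\nabla u
\]
is nonnegative (a consequence of Young's inequality), together with the algebraic identity $L(u,\phi)=\abs{\nabla u}^p-\abs{\nabla\phi}^{p-2}\nabla\phi\cdot\nabla(u^p/\phi^{p-1})$, which one checks by expanding $\nabla(u^p/\phi^{p-1})$.

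Integrating this inequality over $\R^N$ yields
\[
\int_{\R^N}\abs{\nabla u}^p\,dx \;\geq\; \int_{\R^N}\abs{\nabla\phi}^{p-2}\,\nabla\phi\cdot\nabla\!\left(\frac{u^p}{\phi^{p-1}}\right)dx.
\]
The next step is to integrate by parts the right-hand side so as to produce the $p$-Laplacian $\Delta_p\phi=\dvg(\abs{\nabla\phi}^{p-2}\nabla\phi)$ and then invoke the supersolution hypothesis \eqref{ine1}. Because $\phi$ is only $C^2$ away from the poles $a_1,\dots,a_n$ and $V$ is singular there, I would perform the integration on the excised domain $\Om_\eps:=\R^N\setminus\bigcup_{i=1}^n B_\eps(a_i)$, obtaining
\[
\int_{\Om_\eps}\abs{\nabla\phi}^{p-2}\nabla\phi\cdot\nabla\!\left(\frac{u^p}{\phi^{p-1}}\right)dx = -\int_{\Om_\eps}\frac{u^p}{\phi^{p-1}}\,\Delta_p\phi\,dx + \sum_{i=1}^n\int_{\prt B_\eps(a_i)}\frac{u^p}{\phi^{p-1}}\,\abs{\nabla\phi}^{p-2}\,\nabla\phi\cdot\nu\,d\sigma.
\]
On $\Om_\eps$ the hypothesis \eqref{ine1} gives the pointwise bound $-\frac{u^p}{\phi^{p-1}}\Delta_p\phi\geq \mu\, V u^p$, so once the boundary terms are shown to vanish, letting $\eps\to0$ and using $Vu^p\in L^1$ (by the local integrability recorded in Remark \ref{rmk1}, item \ref{rmk11}) together with monotone convergence produces exactly $\int_{\R^N}\abs{\nabla u}^p\,dx\geq\mu\int_{\R^N}V\abs{u}^p\,dx$.

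I expect the control of the boundary integrals over the spheres $\prt B_\eps(a_i)$ to be the main obstacle. Each such term scales like $\eps^{N-1}$ multiplied by the near-pole values of $\frac{u^p}{\phi^{p-1}}\abs{\nabla\phi}^{p-1}$, so its vanishing hinges on the precise blow-up/decay rates of $\phi$ and $\nabla\phi$ at each $a_i$. For the supersolutions relevant to Theorem \ref{th1} and Theorem \ref{th2} the explicit profile $\phi\sim\abs{x-a_i}^{(p-N)/(2(p-1))}$ makes $u^p/\phi^{p-1}$ vanish while $\abs{\nabla\phi}^{p-1}$ blows up at a controlled rate, and a direct computation of the resulting power of $\eps$ shows it is strictly positive, forcing the boundary contributions to zero. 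Alongside this, I would verify that $u^p/\phi^{p-1}$ is a legitimate (finite-energy, compactly supported) competitor so that the limiting inequality is valid; this likewise reduces to the integrability estimates already available for $V_1$ and $V_2$.
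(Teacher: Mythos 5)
Your proposal reconstructs exactly the Allegretto--Huang Picone-identity argument (the pointwise inequality $\abs{\nabla u}^p \geq \abs{\nabla\phi}^{p-2}\nabla\phi\cdot\nabla(u^p/\phi^{p-1})$, integration by parts on excised domains, then the supersolution hypothesis), which is precisely the route the paper takes --- it simply cites this as ``a trivial consequence'' of the theorem in \cite{picone} without reproducing the details. Your additional attention to the vanishing of the boundary terms on $\prt B_\eps(a_i)$, verified via the explicit profile $\phi\sim\abs{x-a_i}^{(p-N)/(2(p-1))}$ used in the applications, is consistent with how the proposition is actually deployed in the paper.
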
 
The proof of Proposition \ref{prop2} is a trivial consequence of  Theorem \ref{th1} proven in \cite{picone}.  
\subsection{Determination of the triplet $(\mu, \phi, V)$ in \eqref{ine1}}
In order to prove inequality \eqref{ec:Hardy1}  in Theorem \ref{th1} it is enough to show that, using notations introduced above,  
$(1, \phi, \mu_1V_1+\mu_2V_2)$ is an admissible triplet in Proposition \ref{prop2}.  Up to some technicalities, this could be checked by direct computations. 
However, in the following we will explain how we reach to this triplet. 
Therefore, we want to find a function $\phi>0$, a constant $\mu$  and a potential $V$, with singularities in the points $a_1$ and $a_2$, depending on $N$ and $p$, which satisfy the identity 
\begin{equation}
    -\frac{\Delta_p\phi}{\phi^{p-1}} = \mu V, \;\; a.e. \text{for}\; x\in\R^N\setminus\{a_1,a_2\}.        \notag
\end{equation}
Inspired by the paper \cite{cazacu3} in the case $p=2$,  for the general case $1<p<N$ we follow, up to some point, the same strategy by considering the functions
\begin{equation} 
\phi_i=\abs{x-a_i}^\be, i=1,2,   \notag
\end{equation}
where $\be$ is negative, aimed to depend on $N$ and $p$ that  will be precised later. We introduce 
\begin{equation}
   \phi=\phi_1 \phi_2.  \label{ec:defphi}
\end{equation}
We compute the $p$-Laplacian of $\phi$ in \eqref{ec:defphi} in several steps. First, we note that
\begin{equation}
    \nabla\phi=\Bigg( \frac{\nabla \phi_1}{\phi_1} + \frac{\nabla \phi_2}{\phi_2} \Bigg)\phi.   \notag
\end{equation}
To simplify the notation, denote 
\begin{equation}   
     v:=\frac{\nabla \phi_1}{\phi_1} + \frac{\nabla \phi_2}{\phi_2}.  \label{ec:defv}
\end{equation}
Using the definition of the $p$-Laplacian operator, we obtain
\begin{align}
    \Delta_p \phi & = \dvg\Big( \abs{\nabla\phi}^{p-2}\nabla\phi \Big) \notag \\
    & = \dvg\Big(\phi^{p-1} \abs{v}^{p-2}v  \Big) \notag \\
    & = \nabla\Big(\phi^{p-1}\Big)\abs{v}^{p-2}v + \phi^{p-1}\nabla\Big(\abs{v}^{p-2}\Big)v + \phi^{p-1}\abs{v}^{p-2} \dvg(v) .  \notag
\end{align}
Hence, we denote
\begin{equation}  
     -\frac{\Delta_p\phi}{\phi^{p-1}} =: V,    \notag
\end{equation}
where 
\begin{equation}
    V = -\Bigg[ \nabla\Big(\abs{v}^{p-2}\Big)\cdot v + \abs{v}^{p-2}\dvg(v) + (p-1)\abs{v}^p \Bigg] \;    \label{ec:potential}
\end{equation}
Next, we compute explicitly the three terms in (\ref{ec:potential}). The expression of $v$ in (\ref{ec:defv}) is given by 
\begin{equation}
     v = \Bigg(\frac{x-a_1}{\abs{x-a_1}^2}+\frac{x-a_2}{\abs{x-a_2}^2} \Bigg). 
\end{equation}
Taking the modulus, we get
\begin{equation}
     \abs{v} = 2\vert\be\vert \frac{\abs{x-a}}{\abs{x-a_1}\abs{x-a_2}},  \notag
\end{equation}
where $a=\frac{a_1+a_2}{2}$ is the medium point on the segment $[a_1,a_2]$.
The gradient in (\ref{ec:potential}) becomes
\begin{align}
   \nabla\Big( \abs{v}^{p-2} \Big) & = (p-2)\abs{v}^{p-3}\nabla\big( \abs{v} \big)  \notag\\
    & = (p-2)\abs{v}^{p-3} 2\abs{\beta} \Bigg[ \frac{x-a}{\abs{x-a}\abs{x-a_1}\abs{x-a_2}}-\frac{\abs{x-a}(x-a_1)}{\abs{x-a_1}^3\abs{x-a_2}}-\frac{\abs{x-a}(x-a_2)}{\abs{x-a_1}\abs{x-a_2}^3} \Bigg]   \notag\\
    & = (p-2)\abs{v}^{p-2}\Bigg[ \frac{x-a}{\abs{x-a}^2} - \frac{x-a_1}{\abs{x-a_1}^2} - \frac{x-a_2}{\abs{x-a_2}^2} \Bigg].  \notag \\
    & = (p-2)\abs{v}^{p-2}\Bigg[ \frac{x-a}{\abs{x-a}^2} -\frac{v}{\be} \Bigg].  \label{ec:term1}
\end{align}
The second term in \eqref{ec:potential} yields to
\begin{equation}
    \dvg(v) = \be (N-2)\Bigg[ \frac{1}{\abs{x-a_1}^2} + \frac{1}{\abs{x-a_2}^2} \Bigg].   \label{ec:term2}
\end{equation}
Using \eqref{ec:term1}, \eqref{ec:term2} and \eqref{ec:potential}, the potential reduces to
\begin{align}\label{ec:potential2}
    V  &= -\abs{v}^{p-2} \Bigg[ (p-2)\left( \frac{x-a}{\abs{x-a}^2}-\frac{1}{\be}v \right)\cdot v   \notag\\
    & \quad \quad \quad \quad \quad \quad + \be(N-2)\sum_{i=1}^2 \frac{1}{\abs{x-a_i}^2} + 4(p-1)\be^2\frac{\abs{x-a}^2}{\abs{x-a_1}^2\abs{x-a_2}^2} \Bigg]  \notag\\
    & = -\abs{v}^{p-2} \left[ T_1 + T_2 + T_3 \right],   
\end{align}
where we denoted 
\begin{align}
    T_1 & = (p-2)\left( \frac{x-a}{\abs{x-a}^2}-\frac{1}{\be}v \right)\cdot v,   \notag\\
    T_2 & = \be(N-2)\sum_{i=1}^2 \frac{1}{\abs{x-a_i}^2},  \notag\\
    T_3 & = 4(p-1)\be^2\frac{\abs{x-a}^2}{\abs{x-a_1}^2\abs{x-a_2}^2}.  \notag
\end{align}
Next, we rearrange the expression of $V$ in \eqref{ec:potential2}. To simplify the computations we employ the notations $v_1:=x-a_1$ and $v_2:=x-a_2$. Hence, $x-a=\frac{v_1+v_2}{2}$ and then we obtain  
\begin{align}
    \abs{v}^{p-2} & = 2^{p-2}\abs{\beta}^{p-2} \frac{\abs{x-a}^{p-2}}{\abs{x-a_1}^{p-2}\abs{x-a_2}^{p-2}}  \notag\\
    & = \abs{\beta}^{p-2} \frac{\abs{v_1+v_2}^{p-2}}{\abs{v_1}^{p-2}\abs{v_2}^{p-2}}.        \label{ec:t0}
\end{align}
The first term in \eqref{ec:potential2} becomes 
\begin{align}
    T_1 & = (p-2)\Bigg( \frac{x-a}{\abs{x-a}^2}-\frac{1}{\be}v \Bigg)\cdot v \notag\\
    & = (p-2)\frac{v_1+v_2}{2}\frac{4}{\abs{v_1+v_2}^2}\beta\Bigg(\frac{v_1}{\abs{v_1}^2}+\frac{v_2}{\abs{v_2}^2}\Bigg) - 4(p-2)\beta\frac{\abs{v_1+v_2}^4}{4\abs{v_1}^2\abs{v_2}^2}   \notag\\
    & = (p-2)\beta \frac{2(v_1+v_2)(v_1\abs{v_2}^2+v_2\abs{v_1}^2)-\abs{v_1+v_2}^2}{\abs{v_1}^2\abs{v_2}^2\abs{v_1+v_2}^2}  
    \label{ec:t1}
\end{align}
The second term in \eqref{ec:potential2} reads to 
\begin{align}
    T_2 & = \be(N-2)\sum_{i=1}^2 \frac{1}{\abs{x-a_i}^2}   \notag\\
    & = \beta(N-2)\frac{\abs{v_1}^2+\abs{v_2}^2}{\abs{v_1}^2\abs{v_2}^2}.   \label{ec:t2}
\end{align}
The third term in \eqref{ec:potential2} is 
\begin{align}
    T_3 & = 4(p-1)\be^2\frac{\abs{x-a}^2}{\abs{x-a_1}^2\abs{x-a_2}^2}   \notag\\
    & = (p-1)\beta^2\frac{\abs{v_1+v_2}^2}{\abs{v_1}^2\abs{v_2}^2}.  \label{ec:t3}
\end{align}
From (\ref{ec:t0}), (\ref{ec:t1}), (\ref{ec:t2}) and (\ref{ec:t3}) we get successively

\begin{align}\label{id2}
    V & = -\abs{\beta}^{p-2}\beta \frac{\abs{v_1+v_2}^{p-2}}{\abs{v_1}^{p-2}\abs{v_2}^{p-2}}\times\frac{1}{\abs{v_1}^2\abs{v_2}^2\abs{v_1+v_2}^2} \Bigg[ (N-2)(\abs{v_1}^2+\abs{v_2}^2)\abs{v_1+v_2}^2+ \notag\\
    & \quad \quad \quad \quad +(p-1)\beta\abs{v_1+v_2}^4 +  2(p-2)(v_1+v_2)(v_1\abs{v_2}^2+v_2\abs{v_1}^2)-(p-2)\abs{v_1+v_2}^4 \Bigg]  \notag\\
    = & \abs{\be}^{p-1} \frac{\abs{v_1+v_2}^{p-4}}{\abs{v_1}^p\abs{v_2}^p}
    \Bigg[ \Big( (p-1)\beta+N-p\Big)(\abs{v_1}^4+\abs{v_2}^4) + 2\Big(2(p-1)\beta+N-p\Big)v_1\cdot v_2(\abs{v_1}^2+\abs{v_2}^2)  \notag\\
    & \quad \quad \quad \quad + 2\Big((p-1)\beta+N+p-4\Big)\abs{v_1}^2\abs{v_2}^2+ 4\Big((p-1)\beta+2-p)\Big)(v_1\cdot v_2)^2 \Bigg]  
\end{align}
In order to get rid of the cross term in the last identity of \eqref{id2} we choose $\be=\frac{p-N}{2(p-1)}$ which implies 
\begin{equation}\label{id3}
	\phi=\phi_1 \phi_2 = \abs{x-a_1}^\frac{p-N}{2(p-1)} \abs{x-a_2}^\frac{p-N}{2(p-1)}.
\end{equation}
Then, using notations above, we  get the following form of the potential $V$:
\begin{align}\label{id1}
    V & = \Bigg( \frac{N-p}{2(p-1)} \Bigg)^{p-1} \frac{\abs{v_1+v_2}^{p-4}}{\abs{v_1}^p\abs{v_2}^p}
    \Bigg[ \Big(\frac{N-p}{2}\Big)(\abs{v_1}^4+\abs{v_2}^4) +\notag\\ 
    & \quad \quad \quad \quad +\Big((N-p+4(p-2)\Big)\abs{v_1}^2\abs{v_2}^2 + \Big( 2(p-N)+4(2-p) \Big) (v_1\cdot v_2)^2   \Bigg]    \notag\\
    & = \Bigg( \frac{N-p}{2(p-1)} \Bigg)^{p-1} \frac{\abs{v_1+v_2}^{p-4}}{\abs{v_1}^p\abs{v_2}^p}
    \Bigg[ \Big(\frac{N-p}{2}\Big)(\abs{v_1}^4+\abs{v_2}^4) + (N-p)\abs{v_1}^2\abs{v_2}^2 \notag\\
    & \quad \quad \quad \quad- 2(N-p)(v_1\cdot v_2)^2 + 4(p-2)\Big( \abs{v_1}^2\abs{v_2}^2 - (v_1\cdot v_2)^2 \Big) \Bigg]  \notag\\
    & = (p-1)\Bigg( \frac{N-p}{2(p-1)} \Bigg)^p \frac{\abs{v_1+v_2}^{p-2}\abs{v_1-v_2}^2}{\abs{v_1}^p\abs{v_2}^p}\notag\\
    & \quad \quad \quad \quad +4(p-2)\Bigg( \frac{N-p}{2(p-1)} \Bigg)^{p-1} \frac{\abs{v_1+v_2}^{p-4}}{\abs{v_1}^p\abs{v_2}^p}\Big[ \abs{v_1}^2\abs{v_2}^2 - (v_1\cdot v_2)^2 \Big]       
    \end{align}
Undoing the notations $v_1$ and $v_2$ in \eqref{id1} we finally obtain 
\begin{multline} 
	V    = \frac{p-1}{4}\Bigg(\frac{N-p}{p-1}\Bigg)^p \frac{\abs{a_1-a_2}^2\abs{x-a}^{p-2}}{\abs{x-a_1}^p\abs{x-a_2}^p} 
  +   \frac{p-2}{2}\Bigg( \frac{N-p}{p-1} \Bigg)^{p-1} \times \\
  \times   \frac{\abs{x-a}^{p-4}}{\abs{x-a_1}^p\abs{x-a_2}^p}\Big[\abs{x-a_1}^2\abs{x-a_2}^2 - \big( (x-a_1)\cdot(x-a_2) \big)^2\Big]    \label{ec:potentialfinal}
\end{multline}
By \eqref{ec:potential_1}-\eqref{ec:constante} we can write
\begin{equation}
    V = \mu_1 V_1 + \mu_2 V_2         \notag
\end{equation} and $\phi$ in \eqref{id3} verifies the identity 
$$-\frac{\Delta_p \phi}{\phi^{p-1}}=V, \textrm{ in } \R^N\setminus\{a_1, a_2\}. $$
The proof of \eqref{ec:Hardy1} is complete now, by \ref{prop2}.

\subsection{Sharpness of inequality $(\ref{ec:Hardy1})$}

We want to show that, for $2<p<N$, the constant $\mu=1$ is sharp in inequality
\begin{equation}
    \int_{\R^N} \abs{\nabla u}^p dx \geq \mu\int_{\R^N} V\abs{u}^p dx,  \notag
\end{equation}
for $u\in C_c^\infty(\R^N)$ and it is actually attained in $\mathcal{D}^{1,p}(\R^N)$ by the function $\phi$ in \eqref{id3}.
We show that $\phi$ satisfies the identity:
\begin{equation}
    \int_{\R^N} \abs{\nabla \phi}^p dx = \int_{\R^N} V\abs{\phi}^p dx,  \label{eq:egalphi}
\end{equation}
which proves both of the facts stated above. This is done using integration by parts, but we need to check the integrability of   $|\nabla\phi|^p$. 
\begin{prop}
For $N\geq 3$, $2<p<N$ and $\phi$ in \eqref{id3} it holds that $\phi\in\mathcal{D}^{1,p}(\R^N)$. 
\end{prop}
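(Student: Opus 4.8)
The plan is to reduce the claim to the explicit formula for $|\nabla\phi|$ already obtained above and then to pure power-counting at the three relevant locations: the poles $a_1,a_2$ and spatial infinity; the middle point $a$ will turn out to be harmless. Writing $\be=\frac{p-N}{2(p-1)}<0$ and recalling that $\nabla\phi=v\phi$ with $|v|=2|\be|\frac{|x-a|}{|x-a_1||x-a_2|}$, we have
\[
  |\nabla\phi|^p=(2|\be|)^p\,\frac{|x-a|^p}{|x-a_1|^p|x-a_2|^p}\,|x-a_1|^{\be p}|x-a_2|^{\be p}.
\]
First I would record that $\phi\in L^1_{loc}(\R^N)$, so that $\phi$ defines a distribution: near $a_i$ one has $\phi\sim|x-a_i|^{\be}$ with $\be>-N$, since $|\be|=\frac{N-p}{2(p-1)}\le\frac{N-p}{2}<N$ for $p\ge 2$. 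It then remains to show $|\nabla\phi|\in L^p(\R^N)$, which I would do by splitting $\R^N$ into small balls around $a_1$ and $a_2$, a ball around $a$, an exterior region $\{|x|>R\}$, and a bounded remainder on which the integrand is continuous, hence integrable.

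Near $a_1$ (and symmetrically near $a_2$) the factors $|x-a|$ and $|x-a_2|$ are bounded away from $0$ and $\infty$, so $|\nabla\phi|^p\sim C|x-a_1|^{p(\be-1)}$, which is integrable iff $p(\be-1)+N>0$. A direct computation gives $p(\be-1)+N=\frac{(p-2)(N-p)}{2(p-1)}$, which is strictly positive precisely because $2<p<N$. At the middle point $a$ the factor $|x-a|^p$ vanishes while the remaining factors stay bounded, so the integrand is bounded there and contributes nothing. At infinity all three distances are comparable to $|x|$, whence $|\nabla\phi|^p\sim C|x|^{p(2\be-1)}$, integrable iff $p(2\be-1)+N<0$; here $p(2\be-1)+N=\frac{p-N}{p-1}<0$, again by $p<N$. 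Combining the four regions yields $\int_{\R^N}|\nabla\phi|^p\,dx<\infty$.

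The only genuinely delicate point is that the distributional gradient of $\phi$ must coincide with the classical gradient computed on $\R^N\setminus\{a_1,a_2\}$, i.e. that no singular part is concentrated at the poles. I expect this to be the main obstacle, and I would settle it by a removability/cutoff argument: since the singular set consists of the two isolated points $\{a_1,a_2\}$, which carry zero capacity for $N\ge 3$, and since both $\phi$ and its classical gradient lie in $L^1_{loc}(\R^N)$, a standard truncation works. Multiplying $\phi$ by cutoffs $\eta_\eps$ that vanish on $B_\eps(a_i)$, equal $1$ off $B_{2\eps}(a_i)$, with $|\nabla\eta_\eps|\lesssim\eps^{-1}$, the error terms are controlled by $\int_{B_{2\eps}(a_i)}|\phi|\,|\nabla\eta_\eps|\,dx\lesssim\eps^{N-1+\be}$ and $\int_{B_\eps(a_i)}|\nabla\phi|\,dx$, both of which tend to $0$ as $\eps\to0$ (the former because $\be>1-N$, the latter because $|\nabla\phi|\in L^p\subset L^1_{loc}$). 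This shows $\phi\in W^{1,1}_{loc}(\R^N)$ with weak gradient equal to the classical one, and together with the $L^p$ bound established above it proves $\phi\in\mathcal{D}^{1,p}(\R^N)$.
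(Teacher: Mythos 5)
Your proof is correct and follows essentially the same route as the paper: the explicit formula $|\nabla\phi|=2|\be|\,|x-a_1|^{\be-1}|x-a_2|^{\be-1}|x-a|$, a decomposition into neighbourhoods of the poles, an exterior region, and a compact remainder, and power counting giving the exponents $\frac{(p-2)(N-p)}{2(p-1)}>0$ at the poles and $\frac{p-N}{p-1}<0$ at infinity. The only addition is your cutoff argument identifying the distributional gradient with the classical one across the zero-capacity set $\{a_1,a_2\}$, a point the paper leaves implicit; this is a welcome (and correct) refinement rather than a different approach.
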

\begin{proof}
Recall that $\be:=\frac{p-N}{2(p-1)}$ and
$$	\phi=\phi_1 \phi_2 = \abs{x-a_1}^\frac{p-N}{2(p-1)} \abs{x-a_2}^\frac{p-N}{2(p-1)}.$$
By direct computation we formally obtain,
\begin{equation}
	\nabla \phi = \be \abs{x-a_1}^{\frac{p-N}{2(p-1)}-2}\abs{x-a_2}^{\frac{p-N}{2(p-1)}-2} \Bigg[ \abs{x-a_2}^2(x-a_1) + \abs{x-a_1}^2(x-a_2) \Bigg].   \label{ec:gradphi}   
\end{equation}
By squaring the relation $(\ref{ec:gradphi})$, we get 
\begin{align}
	\abs{\nabla\phi}^2 &= 4\beta^2 \abs{x-a_1}^{2\big(\frac{p-N}{2(p-1)}-1\big)}\abs{x-a_2}^{2\big(\frac{p-N}{2(p-1)}-1\big)} \abs{x-a}^2.   \notag
\end{align}
Hence, 
\begin{align}
	\abs{\nabla\phi} &= 2\abs{\beta} \abs{x-a_1}^{\frac{p-N}{2(p-1)}-1}\abs{x-a_2}^{\frac{p-N}{2(p-1)}-1} \abs{x-a}.    \notag
\end{align}
Let $0<r<\frac{\abs{a_1-a_2}}{4}$ and $R\geq \max\Big\{2\abs{a_1}, 2\abs{a_2}\Big\}+2r$. Define $B_r^i:=B(a_i,r)$ to be the ball centered at $a_i$ and of radius $r$, $i=1,2$, and $B_R:=B(a,R)$, where $a=\frac{a_1+a_2}{2}$, to be the ball centered at $a$ and of radius $R$. By the choice of $r$ and $R$, we can see that $B_R$ contains both $B_r^1$ and $B_r^2$. We prove the $L^p$-integrability of $\nabla\phi$, for $2<p<N$, as follows. We split
\begin{align}
	\int_{\R^N} \abs{\nabla \phi}^p dx 
	&= \int_{\R^N\setminus B_R}\abs{\nabla\phi}^p dx + \int_{B_R}\abs{\nabla\phi}^p dx  \; := \; I_1+I_2.  \notag
\end{align}
First, we notice that, in $\R^N\setminus B_R$,
\begin{equation}
	\abs{x-a_i} < \abs{x}+\abs{a_i} < \abs{x} + R < 2\abs{x}, \label{ec:inegg1}
\end{equation}
\begin{equation}
	\abs{x-a_i} > \abs{x} - \abs{a_i} >\abs{x} - \frac{R}{2} = \frac{\abs{x}}{2} +\frac{\abs{x}-R}{2} > \frac{\abs{x}}{2}. \label{ec:inegg2}
\end{equation}
Similarly, we have that 
\begin{equation}
	\abs{x} \leq \abs{x-a} \leq 2\abs{x}  \;\; \text{in}\; \R^N\setminus B_R.   \label{ec:inegg3}
\end{equation}
Therefore, \;\;$\abs{x-a}$\;\; and \;\;$\abs{x-a_i}$\;\; behave asymptotically as \;\;$\abs{x}$, for $x\in \R^N\setminus B_R$.
Next we will write $"\simeq"$ and $"\lesssim"$ instead of usual notations, meaning that the equality or inequality holds up to some constant. By ($\ref{ec:inegg1}$), ($\ref{ec:inegg2}$), ($\ref{ec:inegg3}$) and co-area formula, we get
\begin{align}
	I_1 & \lesssim  \int_{\R^N\setminus B_R} \abs{x}^{\frac{(p-N)p}{2(p-1)}-p} \abs{x}^{\frac{(p-N)p}{2(p-1)}-p} \abs{x}^p dx   \notag\\
	& =  \int_{\R^N\setminus B_R} \abs{x}^{\frac{(p-N)p}{p-1}-p} dx   \notag\\
	& =  \int_{\R^N\setminus B_R} \abs{x}^{\frac{p(1-N)}{p-1}} dx   \notag\\
	& \simeq  \int_R^\infty s^{\frac{p(1-N)}{p-1}} s^{N-1} ds  \notag\\
	& =  \int_R^\infty s^\frac{1-N}{p-1} ds,      \notag
\end{align}
which is finite when $\frac{1-N}{p-1}+1<0$. So, $I_1<\infty$ for any $p<N$.\\ 
We now estimate $I_2$ by splitting it in two terms. 
\begin{equation}
	I_2 = \int_{B_r^1\cup B_r^2} \abs{\nabla\phi}^p dx +  \int_{B_R\setminus (B_r^1\cup B_r^2)} \abs{\nabla\phi}^p dx.   \notag
\end{equation}
The integral on $\overline{B_R\setminus (B_r^1\cup B_r^2)}$ is finite, since the function under integration is continuous. On the other hand, we have:
\begin{align}
	\int_{B_r^i} \abs{\nabla\phi}^p dx & \lesssim  \int_{B_r^i} \abs{x-a_i}^{{\frac{(p-N)p}{2(p-1)}}-p} dx   \notag\\
	& \simeq  \int_0^r s^{{\frac{(p-N)p}{2(p-1)}}-p} s^{N-1} ds   \notag\\
	& =  \int_0^r s^{\frac{(p-N)(2-p)}{2(p-1)}-1} ds.    \notag
\end{align}
For $2<p<N$, the integral above is finite, for $i=1,2$. In conclusion, $\phi$ belongs to $\mathcal{D}^{1,p}(\R^N)$ for any $2<p<N$.
\end{proof}
Taking into account that $V=-\frac{\Delta_p\phi}{\phi^p}$ and integrating by parts in \eqref{eq:egalphi}, we get
\begin{equation}
    \int_{\R^N} V\abs{\phi}^p dx = \int_{\R^N} -\frac{\Delta_p\phi}{\phi^{p-1}}\phi^p dx = \int_{\R^N} \dvg\Big(\abs{\nabla\phi}^{p-2}\nabla\phi\Big)\phi dx  = \int_{\R^N} \abs{\nabla \phi}^p dx,
\end{equation}
which concludes the proof of Theorem \ref{th1}.

\section{Proof of Theorem \ref{th2}}

First we need the following lemma.
\begin{lemma}  \label{lemma1}  
Assume $p\geq 2$. Let $\phi$ be a positive function in $\R^N$ with $\phi\in C^2(\R^N\setminus\{a_1,\;a_2\})$ and let $V\in L^1_{loc}(\R^N)$ be a continuous potential on $\R^N\setminus\{a_1,\;a_2\}$ such that 
\begin{equation}
    -\Delta_p \phi(x) - V \phi(x)^{p-1} \geq 0\;, \;\;\; \forall x\in\R^N\setminus\{a_1,\;a_2\}.   \label{ec:ip1}
\end{equation}
Then there exists $c_1(p)$ such that
\begin{equation}
   c_1(p) \int_{\R^N} \abs{\nabla\Big(\frac{u}{\phi}\Big)}^p \phi^p dx \leq \int_{\R^N} \abs{\nabla u}^p dx - \int_{\R^N} V\abs{u}^p dx \;,\;\; \forall \;u\in  C_c^\infty(\R^N\setminus\{a_1,\;a_2\}).   \label{ec:eq11}
\end{equation}
Moreover, assume we have equality in \eqref{ec:ip1}. Then the following reverse inequality holds for any $\;u\in  C_c^\infty(\R^N\setminus\{a_1,\;a_2\})$:
\begin{equation}
    \int_{\R^N} \abs{\nabla u}^p dx - \int_{\R^N} V\abs{u}^p dx  
    \leq \frac{p(p-1)}{2} \int_{\R^N} \Big(\phi\abs{\nabla \Big(\frac{u}{\phi}\Big) }+\frac{u}{\phi}\abs{\nabla\phi}\Big)^{p-2} \phi^2\abs{\nabla \Big(\frac{u}{\phi}\Big)}^2.   \label{ec:lemma322}
\end{equation}
\end{lemma}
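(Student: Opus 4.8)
The plan is to pass to the new variable $w:=u/\phi$, so that $u=\phi w$ and $\nabla u=w\,\nabla\phi+\phi\,\nabla w$, and to control $\abs{\nabla u}^p=\abs{w\nabla\phi+\phi\nabla w}^p$ by two complementary pointwise inequalities for the convex map $\xi\mapsto\abs{\xi}^p$, both available precisely because $p\geq2$. For the lower bound \eqref{ec:eq11} I would apply the uniform convexity estimate
$$\abs{\xi+\eta}^p\;\geq\;\abs{\xi}^p+p\abs{\xi}^{p-2}\xi\cdot\eta+c_1(p)\abs{\eta}^p$$
with some $c_1(p)>0$, taking $\xi=w\nabla\phi$ and $\eta=\phi\nabla w$. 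For the reverse bound \eqref{ec:lemma322} I would instead use the second-order Taylor estimate
$$\abs{\xi+\eta}^p\;\leq\;\abs{\xi}^p+p\abs{\xi}^{p-2}\xi\cdot\eta+\frac{p(p-1)}{2}\big(\abs{\xi}+\abs{\eta}\big)^{p-2}\abs{\eta}^2,$$
which follows from Taylor's formula together with the fact that the Hessian of $\xi\mapsto\abs{\xi}^p$ has operator norm at most $p(p-1)\abs{\zeta}^{p-2}$ at each point $\zeta$ on the segment joining $\xi$ to $\xi+\eta$, combined with the monotonicity of $t\mapsto t^{p-2}$ (so that $\abs{\zeta}^{p-2}\leq(\abs{\xi}+\abs{\eta})^{p-2}$). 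With $\xi=w\nabla\phi$ and $\eta=\phi\nabla w$ one checks $\abs{\xi}+\abs{\eta}=\tfrac{u}{\phi}\abs{\nabla\phi}+\phi\abs{\nabla(u/\phi)}$ and $\abs{\eta}^2=\phi^2\abs{\nabla(u/\phi)}^2$, which is exactly the integrand on the right-hand side of \eqref{ec:lemma322}.

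The decisive step common to both estimates is the treatment of the linear (cross) term $p\int\abs{\xi}^{p-2}\xi\cdot\eta\,dx$. Using the identity $\abs{w}^{p-2}w\,\nabla w=\tfrac1p\nabla(\abs{w}^p)$, I would rewrite it as $\int\phi\abs{\nabla\phi}^{p-2}\nabla\phi\cdot\nabla(\abs{w}^p)\,dx$. Since $u$, and hence $w$, has compact support in $\R^N\setminus\{a_1,a_2\}$, where $\phi$ is $C^2$, integration by parts produces no boundary terms and yields
$$p\int_{\R^N}\abs{\xi}^{p-2}\xi\cdot\eta\,dx=-\int_{\R^N}\big(\abs{\nabla\phi}^p+\phi\,\Delta_p\phi\big)\abs{w}^p\,dx.$$
Adding the term $\int\abs{\xi}^p\,dx=\int\abs{\nabla\phi}^p\abs{w}^p\,dx$ cancels the $\abs{\nabla\phi}^p$ contribution and leaves $-\int\phi\,\Delta_p\phi\,\abs{w}^p\,dx=-\int\frac{\Delta_p\phi}{\phi^{p-1}}\abs{u}^p\,dx$, after using $\phi\abs{w}^p=\abs{u}^p/\phi^{p-1}$.

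It then remains to invoke the hypothesis. For \eqref{ec:eq11}, inequality \eqref{ec:ip1} gives $-\Delta_p\phi/\phi^{p-1}\geq V$ pointwise, whence $-\int\frac{\Delta_p\phi}{\phi^{p-1}}\abs{u}^p\,dx\geq\int V\abs{u}^p\,dx$; combined with the convexity lower bound this produces $c_1(p)\int\phi^p\abs{\nabla(u/\phi)}^p\,dx\leq\int\abs{\nabla u}^p\,dx-\int V\abs{u}^p\,dx$. For \eqref{ec:lemma322}, equality in \eqref{ec:ip1} upgrades the cross-term computation to the exact identity $-\int\frac{\Delta_p\phi}{\phi^{p-1}}\abs{u}^p\,dx=\int V\abs{u}^p\,dx$, so that integrating the Taylor upper bound rearranges directly into the claimed reverse inequality.

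I expect the main obstacle to be the careful justification of the two pointwise inequalities with the correct constants — in particular pinning down the factor $\frac{p(p-1)}{2}$ from the Hessian bound for $\abs{\cdot}^p$ and ensuring the remainder is controlled uniformly up to $\zeta=0$ (harmless for $p\geq2$, where $\abs{\cdot}^p$ is $C^2$) — together with confirming that the integration by parts is legitimate; here the assumption $u\in C_c^\infty(\R^N\setminus\{a_1,a_2\})$ is exactly what removes any difficulty at the poles. One should also note that $u$ may be taken nonnegative, replacing $u$ by $\abs{u}$ where needed, to match the sign conventions in the statement of \eqref{ec:lemma322}.
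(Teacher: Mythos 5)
Your proposal is correct and follows essentially the same route as the paper: the substitution $w=u/\phi$, integration by parts on the cross term to produce $-\int\frac{\Delta_p\phi}{\phi^{p-1}}\abs{u}^p\,dx$, a uniform-convexity lower bound for \eqref{ec:eq11} (which the paper delegates to Theorem 2.2 of \cite{cazacu2}), and the pointwise second-order upper bound for \eqref{ec:lemma322} (which the paper quotes from \cite{shafrir} rather than rederiving from the Hessian of $\abs{\cdot}^p$ as you do).
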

\begin{proof}
The proof of inequality (\ref{ec:eq11}) is a trivial adaptation of Theorem 2.2 from \cite{cazacu2}. We focus now on the proof of \eqref{ec:lemma322}\\
Using the hypothesis and integrating by parts, we get
\begin{align}
    \int_{\R^N} \abs{\nabla u}^p & dx 
    - \int_{\R^N} V\abs{u}^p dx   = \int_{\R^N} \abs{\nabla u}^p dx + \int_{\R^N} \frac{\Delta_p\phi}{\phi^{p-1}}\abs{u}^p dx  \notag\\
    & = \int_{\R^N} \abs{\phi\nabla \Big(\frac{u}{\phi}\Big) + \frac{u}{\phi}\nabla\phi}^p - p \Big(\frac{u}{\phi}\Big)^{p-1}\phi\abs{\nabla\phi}^{p-2}\nabla \Big(\frac{u}{\phi}\Big)\cdot\nabla\phi - \Big(\frac{u}{\phi}\Big)^p\abs{\nabla\phi}^p dx  \label{ec:eq12}
\end{align}
We employ an  inequality from \cite{shafrir}: for $p\geq 2$ it holds
\begin{equation}
    \abs{x+y}^p - p \abs{y}^{p-1}y\cdot x - \abs{y}^p \leq \frac{p(p-1)}{2}\Big(\abs{x}+\abs{y}\Big)^{p-2} \abs{x}^2\;,\;\; \forall\;x,\;y\in\R^N.    \label{ec:ineqshafrir}
\end{equation}
Applying \eqref{ec:ineqshafrir} for $x=\phi\nabla \Big(\frac{u}{\phi}\Big)$ and $y=\frac{u}{\phi}\nabla\phi$ in ($\ref{ec:eq12}$) we obtain
\begin{align}
    \int_{\R^N} \abs{\nabla u}^p dx - \int_{\R^N} V\abs{u}^p dx  
    & \leq \frac{p(p-1)}{2} \int_{\R^N} \Big(\phi\abs{\nabla \Big(\frac{u}{\phi}\Big)}+\frac{u}{\phi}\abs{\nabla\phi}\Big)^{p-2} \phi^2
    \abs{\nabla \Big(\frac{u}{\phi}\Big)}^2.   \label{ec:eq13}  
\end{align}
\end{proof}
We can easily extend the result above to  functions $u$ in $W_0^{1, p}(\R^N)$.

\subsection{Asymptotic behavior of $V_1$ and $V_2$}
This section is also useful for the proof of optimality of $\mu_1$ in \eqref{ec:Hardy2}. 
In order to compare the potentials $V_1$ and $V_2$ we analyze their behavior at the singular points $a_1$, $a_2$, at the degenerate point $a$ and at infinity, respectively. Recall that
\begin{equation}
	V_1 = \frac{\abs{a_1-a_2}^2\abs{x-a}^{p-2}}{\abs{x-a_1}^p\abs{x-a_2}^p}.     \notag
\end{equation}
Fix $p$ such that $2<p<N$. Then one can easily see that
\begin{equation}
	\lim_{x\to a_i} \abs{x-a_i}^pV_1 = 2^{2-p}=:c_1.  \notag
\end{equation}
In the middle point $a=\frac{a_1+a_2}{2}$,\; $V_1$ tends to $0$, as 
\begin{equation}
	\lim_{x\to a} \abs{x-a}^{2-p} V_1 = 4^p\abs{a_1-a_2}^{2(1-p)}=:c_2. \notag
\end{equation}
At infinity, we have
\begin{equation}
	\lim_{\abs{x}\to\infty} \abs{x}^{p+2} V_1 = \abs{a_1-a_2}^2=:c_3. \notag  
\end{equation}
In consequence, for $p>2$ and $i=1,2$, we have 
\begin{equation}
	V_1(x)= \left\{\begin{array}{cc}
		c_1|x-a_i|^{-p},  & \textrm{ as } x\rightarrow a_i\notag\\[7pt]
		c_2|x-a|^{p-2}, & \textrm{ as } x\rightarrow a\notag\\[5pt]
		c_3|x|^{-(p+2)}, & \textrm{ as } x\rightarrow \infty \notag\\[5pt]	
	\end{array}\right.
\end{equation}
Now we look at $V_2$:
\begin{equation}
	V_2(x) =\frac{\abs{x-a}^{p-4}}{\abs{x-a_1}^p\abs{x-a_2}^p}\Big[\abs{x-a_1}^2\abs{x-a_2}^2 - \big( (x-a_1)\cdot(x-a_2) \big)^2\Big].    \notag
\end{equation}

Taking into account the asymptotic behavior of $V_1$ around the singularities $a_i$, we further emphasize that $V_2$ is dominated by $V_1$ in a neighbourhood of $a_i$.
\begin{prop}  \label{prop41}
There exists $r_0>0$ such that, for any $\delta>0$, it holds
\begin{equation}
    \mu_2 V_2 - \frac{\delta}{2}V_1 < 0.
\end{equation}
\end{prop}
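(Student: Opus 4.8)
The plan is to prove the inequality as a local statement on a union $B_{r_0}^1\cup B_{r_0}^2$ of small balls about the two poles, by comparing the leading-order behavior of $V_2$ and $V_1$ as $x\to a_i$. Since $\mu_2=0$ when $p=2$ and $V_1>0$ near each pole, the claim is immediate in that case, so I restrict to $2<p<N$. As in the computation of the potential, I keep the shorthand $v_1:=x-a_1$, $v_2:=x-a_2$.

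First I would control the order of vanishing of the Gram-type bracket in $V_2$. Writing $v_1=\abs{v_1}\,\omega$ with $\omega$ a unit vector, Cauchy--Schwarz gives
\[
\abs{v_1}^2\abs{v_2}^2-(v_1\cdot v_2)^2=\abs{v_1}^2\big(\abs{v_2}^2-(\omega\cdot v_2)^2\big)\le \abs{v_1}^2\abs{v_2}^2,
\]
so near $a_1$ the bracket is $O(\abs{v_1}^2)$ uniformly in the direction of approach. Substituting into the definition \eqref{potential_2} of $V_2$ and using that the remaining factors $\abs{x-a}^{p-4}$ and $\abs{v_2}^{-p}$ stay bounded above on a fixed ball $B_{r_0}^1$, I obtain an estimate of the form $0\le V_2\le C\,\abs{x-a_1}^{2-p}$ there, with $C=C(N,p,a_1,a_2)$; the symmetric bound holds on $B_{r_0}^2$.

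Next I would bound $V_1$ from below. Since $\abs{a_1-a_2}^2\abs{x-a}^{p-2}$ and $\abs{v_2}^{-p}$ are bounded away from zero on $B_{r_0}^1$ (consistently with the already computed limit $\lim_{x\to a_i}\abs{x-a_i}^p V_1=2^{2-p}$), the definition \eqref{ec:potential_1} yields $V_1\ge c\,\abs{x-a_1}^{-p}$ for some $c>0$. Dividing the two estimates gives
\[
\frac{\mu_2 V_2}{V_1}\le \frac{\mu_2 C}{c}\,\abs{x-a_1}^2\longrightarrow 0\quad\text{as }x\to a_1,
\]
and the same at $a_2$. Hence, given $\delta>0$, it suffices to pick $r_0$ small enough that $\tfrac{\mu_2 C}{c}\,r_0^2<\tfrac{\delta}{2}$, which forces $\mu_2 V_2<\tfrac{\delta}{2}V_1$ on $(B_{r_0}^1\cup B_{r_0}^2)\setminus\{a_1,a_2\}$, i.e. $\mu_2 V_2-\tfrac{\delta}{2}V_1<0$.

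The only genuinely delicate point is the first step: extracting the factor $\abs{v_1}^2$ from the bracket uniformly over all approach directions, so that $V_2$ is of order $\abs{x-a_1}^{2-p}$ while $V_1$ is of order $\abs{x-a_1}^{-p}$; the gap of exponent $2$ is exactly what makes the ratio vanish. The rest amounts to bounding continuous, non-vanishing factors on a compact ball separated from the opposite pole, which is routine. I would also be careful to record that the result is local --- the inequality must be restricted to a neighborhood of the poles and cannot be upgraded to all of $\R^N$.
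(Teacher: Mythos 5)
Your argument is correct and is essentially the paper's own: both proofs are local estimates on small balls $B(a_i,r_0)$, using Cauchy--Schwarz to show the Gram-type bracket in $V_2$ is $O(\abs{x-a_i}^2\abs{x-a_j}^2)$ so that $\mu_2 V_2/V_1\lesssim \abs{x-a_i}^2\to 0$, and then choosing $r_0$ small in terms of $\delta$ (the paper writes the same comparison via $\alpha=\cos\varphi$ and the bound $1-\alpha^2\le 1$). Your closing remarks --- that the inequality is only local near the poles and that $r_0$ must depend on $\delta$ --- match how the proposition is actually used in the proof of Theorem \ref{th2}.
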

\begin{proof}
Let $\delta>0$. Denote by $M:=\abs{a_1-a_2}$ and $\alpha:=\cos(\varphi)\in[-1,1]$, where $\cos(\varphi)=\frac{(x-a_1)\cdot(x-a_2)}{\abs{x-a_1}\abs{x-a_2}}$. Let $r_0>0$ aimed to be small and $x\in B(a_i,r_0)$. Then 
\begin{align}
     \mu_2 V_2 - \frac{\delta}{2}V_1 & < 0    \notag\\
\iff  \mu_2 \abs{x-a}^{p-4}  \Big[\abs{x-a_1}^2\abs{x-a_2}^2 & -\big((x-a_1)\cdot(x-a_2)\big)^2\Big]  - \frac{\delta}{2}\abs{x-a}^{p-2}\abs{a_1-a_2}^2 < 0 \notag\\
\iff 2\mu_2(1-\alpha^2)\abs{x-a_1}^2\abs{x-a_2}^2 & - \delta \abs{x-a}^2\abs{a_1-a_2}^2 < 0   \notag\\
\iff 8\mu_2(1-\alpha^2)r_0^2(r_0+M)^2 & - \delta M^4 <0.  \notag
\end{align}
When $r_0\to 0$, the left hand-side tends to $-\delta M^4 < 0$, which proves our statement.
\end{proof}

\subsection{Proof of Theorem \ref{th2}}
Finally, we are ready to prove that it holds
\begin{equation}
    \int_{\R^N} \abs{\nabla u}^p dx \geq \mu_1 \int_{\R^N} V_1\abs{u}^p   \notag
\end{equation}
for any $2\leq p<N$ and that the constant $\mu_1$ is sharp. \\
The inequality follows from Theorem \ref{th1} and Remark \ref{rmk1}. We will prove here the sharpness of the constant. The case $p=2$ is proved in \cite{cazacu1}, so here we prove it  for $p>2$. In order to do this, assume there exists $\eps_0>0$ such that
\begin{equation}
    \int_{\R^N} \abs{\nabla u}^p dx - (\mu_1+\eps_0) \int_{\R^N} V_1\abs{u}^p dx \geq 0, \;\;\; \forall u\in C_c^\infty(\R^N\setminus\{a_1, a_2\}).  \label{ec:contradictie}
\end{equation}
Denote 
\begin{equation}
    L[u]:=\int_{\R^N} \abs{\nabla u}^p dx - (\mu_1+\eps_0)\int_{\R^N} V_1\abs{u}^p dx.  \notag
\end{equation}
We add and subtract $\mu_2 V_2\abs{u}^p$ in the second integral:
\begin{align}
    L[u] & =\int_{\R^N} \abs{\nabla u}^p dx - \int_{\R^N} \big(\mu_1 V_1+\mu_2 V_2\big)\abs{u}^p dx + \int_{\R^N} \big(\mu_2 V_2 - \eps_0 V_1\big)\abs{u}^p dx  \notag\\
    & = \int_{\R^N} \abs{\nabla u}^p dx - V\abs{u}^p dx + \int_{\R^N} \big(\mu_2 V_2 - \eps_0 V_1\big)\abs{u}^p dx  \label{ec:eq21}
\end{align}
By Proposition \ref{prop41}, for $\delta=\eps_0$, we get that
\begin{equation}
    \mu_2 V_2 - \eps_0 V_1 < -\frac{\eps_0}{2}V_1 < 0.   \label{ec:eq22}
\end{equation}

For any $\eps>0$ chosen under the assumption that $\eps< \min\{\frac{1}{2}, r_0^2\}$, define the following cut-off function:
\[ \theta_\eps(x)=\left\{\begin{array}{l l}
    0, & \text{ if } x\in B_{\eps^2}(a_i), \;\;\; \text{for}\;\; i=1,2  \\
    \frac{\log\big(\abs{x-a_i}/\eps^2\big)}{\log\frac{1}{\eps}} , & \text{ if } x\in B_\eps(a_i)\setminus B_{\eps^2}(a_i), \;\;\; \text{for}\;\; i=1,2 \\
    \frac{\log\big(\eps/\abs{x-a_i}^2\big)}{\log\frac{1}{\eps}} , & \text{ if } x\in B_{\eps^{1/2}}(a_i)\setminus B_\eps(a_i) , \;\;\; \text{for}\;\; i=1,2 \\
    0, & \text{otherwise}.
    \end{array}\right.
\]
Consider $u_\eps=\phi\theta_\eps$, where $\phi$ is defined in \eqref{ec:defphi}. Due to the fact that $\theta_\eps\in W_0^{1, p}(\R^N)$ we conclude that $u_\eps$ is also in $W_0^{1, p}(\R^N)$. 
Taking $u=u_\eps$ in \eqref{ec:eq21} we get
\begin{align}
    L[u_\eps] & = \int_{\R^N} \abs{\nabla u_\eps}^p - V\abs{u_\eps}^p dx + \int_{\R^N} \big(\mu_2 V_2 - \eps_0 V_1\big)\abs{u_\eps}^p dx   \label{ec:eq23}\\
    & =: I_\eps + J_\eps.     \notag
\end{align}
We will use Lemma \ref{lemma1} in order to estimate $I_\eps$. By direct computation, the gradient of $\theta_\eps$ is
\[ \nabla\theta_\eps(x)=\left\{\begin{array}{l l}
    0, & \text{ if } x\in B_{\eps^2}(a_i), \;\;\; \text{for}\;\; i=1,2  \\
    \big(\log\frac{1}{\eps}\big)^{-1}\frac{x-a_i}{\abs{x-a_i}^2} , & \text{ if } x\in B_\eps(a_i)\setminus B_{\eps^2}(a_i), \;\;\; \text{for}\;\; i=1,2 \\
    -2\big(\log\frac{1}{\eps}\big)^{-1}\frac{x-a_i}{\abs{x-a_i}^2} , & \text{ if } x\in B_{\eps^{1/2}}(a_i)\setminus B_\eps(a_i) , \;\;\; \text{for}\;\; i=1,2 \\
    0, & \text{otherwise}.
    \end{array}\right.
\]
Restricting to the support of $\theta_\eps$, we get
\begin{align}
    I_\eps &= \sum_{i=1}^2\int_{B_\eps(a_i)\setminus B_{\eps^2}(a_i)} \abs{\nabla u_\eps}^p - V\abs{u_\eps}^p dx + \sum_{i=1}^2\int_{B_{\eps^{1/2}}(a_i)\setminus B_\eps(a_i)} \abs{\nabla u_\eps}^p - V\abs{u_\eps}^p dx =: I_{1,\eps} + I_{2,\eps}.   \notag
\end{align}
Recall that
\begin{equation}
    \phi(x)=\abs{x-a_1}^\be\abs{x-a_2}^\be \;,\;\;\; \be=\frac{p-N}{2(p-1)}.  \notag
\end{equation}
By \eqref{ec:lemma322} in Lemma \ref{lemma1} and using the co-area formula, we get the estimate
\begin{align}
    I_{1,\eps} & = \sum_{i=1}^2\int_{B_\eps(a_i)\setminus B_{\eps^2}(a_i)} \abs{\nabla u_\eps}^p - V\abs{u_\eps}^p dx     \notag\\
    & \lesssim \sum_{i=1}^2\int_{B_\eps(a_i)\setminus B_{\eps^2}(a_i)} \Big( \phi\abs{\nabla\theta_\eps}+\theta_\eps\abs{\nabla\phi} \Big)^{p-2}\phi^2\abs{\nabla\theta_\eps}^2 dx     \notag\\
    & \lesssim  \sum_{i=1}^2\int_{B_\eps(a_i)\setminus B_{\eps^2}(a_i)} \Big( \big(\log\frac{1}{\eps}\big)^{-1}\abs{x-a_i}^{\be-1} + \big(\log\frac{1}{\eps}\big)^{-1}\log\frac{\abs{x-a_i}}{\eps^2}\abs{x-a_i}^{\be-1} \Big)^{p-2}\times\nonumber\\
   & \times 
     \abs{x-a_i}^{2\be-2}\big(\log\frac{1}{\eps}\big)^{-2}    dx     \notag\\         
    & \simeq \Big(\log\frac{1}{\eps}\Big)^{-p} \sum_{i=1}^2\int_{B_\eps(a_i)\setminus B_{\eps^2}(a_i)} \Big(1 + \log\frac{\abs{x-a_i}}{\eps^2} \Big)^{p-2}\abs{x-a_i}^{p(\be-1)} dx     \notag\\
    & \simeq \Big(\log\frac{1}{\eps}\Big)^{-p} \int_{\eps^2}^\eps \Big( 1+\log\frac{s}{\eps^2} \Big)^{p-2}  s^{p(\be-1)+N-1} ds    \notag\\
    &\lesssim \Big(\log\frac{1}{\eps}\Big)^{-p} \Big(1+\log\frac{1}{\eps}\Big)^{p-2}  \int_{\eps^2}^\eps  s^{(p-N)\big( \frac{p}{2(p-1)}-1\big)-1} ds   \notag\\
    & \lesssim \Big(\log\frac{1}{\eps}\Big)^{-2} \eps^{(p-N)\big( \frac{p}{2(p-1)}-1\big)}.    \notag
\end{align}
Similarly,
\begin{align}
    I_{2,\eps} & = \sum_{i=1}^2\int_{B_{\eps^{1/2}}(a_i)\setminus B_\eps(a_i)} \abs{\nabla u_\eps}^p - V\abs{u_\eps}^p dx   \notag\\
    & \lesssim \sum_{i=1}^2\int_{B_{\eps^{1/2}}(a_i)\setminus B_\eps(a_i)} \Big( \phi\abs{\nabla\theta_\eps}+\theta_\eps\abs{\nabla\phi} \Big)^{p-2}\phi^2\abs{\nabla\theta_\eps}^2 dx     \notag\\ 
    & \lesssim  \sum_{i=1}^2\int_{B_\eps(a_i)\setminus B_{\eps^2}(a_i)} \Big( \big(\log\frac{1}{\eps}\big)^{-1}\abs{x-a_i}^{\be-1} + \big(\log\frac{1}{\eps}\big)^{-1}\log\frac{\eps}{\abs{x-a_i}^2}\abs{x-a_i}^{\be-1} \Big)^{p-2}
    \times\nonumber\\
    &\times \abs{x-a_i}^{2\be-2}\big(\log\frac{1}{\eps}\big)^{-2}    dx     \notag\\
    & \simeq \Big(\log\frac{1}{\eps}\Big)^{-p} \sum_{i=1}^2\int_{B_{\eps^{1/2}}(a_i)\setminus B_\eps(a_i)} \Big(1 + \log\frac{\eps}{\abs{x-a_i}^2} \Big)^{p-2} \abs{x-a_i}^{p(\be-1)} dx    \notag\\
    & \simeq \Big(\log\frac{1}{\eps}\Big)^{-p} 2\int_\eps^{\eps^{1/2}} \Big( 1+\log\frac{\eps}{s^2} \Big)^{p-2}  s^{p(\be-1)+N-1} ds    \notag\\
    &\lesssim \Big(\log\frac{1}{\eps}\Big)^{-p} \Big(1+\log\frac{1}{\eps} \Big)^{p-2} \int_\eps^{\eps^{1/2}}  s^{(p-N)\big( \frac{p}{2(p-1)}-1\big)-1} ds   \notag\\
    & \lesssim \Big(\log\frac{1}{\eps}\Big)^{-2} \eps^{(p-N)\big( \frac{p}{2(p-1)}-1\big)}.    \notag
\end{align}
Hence, the estimate for $I_\eps$ is
\begin{equation}
    I_\eps = I_{1,\eps} + I_{2,\eps} \leq C_1 \Big(\log\frac{1}{\eps}\Big)^{-2} \eps^{(p-N)\big( \frac{p}{2(p-1)}-1\big)}.   \label{ec:estimIeps}
\end{equation}
for some positive constant $C_1$ independent of $\eps$.
Now we split $J_\eps$:
\begin{align}
    J_\eps &= \sum_{i=1}^2\int_{B_\eps(a_i)\setminus B_{\eps^2}(a_i)} \big(\mu_2 V_2 - \eps_0 V_1\big)\abs{u_\eps}^p dx + \sum_{i=1}^2\int_{B_{\eps^{1/2}}(a_i)\setminus B_\eps(a_i)} \big(\mu_2 V_2 - \eps_0 V_1\big)\abs{u_\eps}^p dx  =: J_{1,\eps} + J_{2,\eps}.   \notag
\end{align}
Using \eqref{ec:eq22} and co-area formula, we get

\begin{align}
    J_{1,\eps} & = \sum_{i=1}^2\int_{B_\eps(a_i)\setminus B_{\eps^2}(a_i)} \big(\mu_2 V_2 - \eps_0 V_1\big)\abs{u_\eps}^p dx  \notag\\
    & < -\frac{\eps_0}{2} \sum_{i=1}^2\int_{B_\eps(a_i)\setminus B_{\eps^2}(a_i)} V_1 \theta_\eps^p\phi^p dx   \notag\\
    & \lesssim -\Big(\log\frac{1}{\eps}\Big)^{-p} \sum_{i=1}^2\int_{B_\eps(a_i)\setminus B_{\eps^2}(a_i)}  \Big( \log\frac{\abs{x-a_i}}{\eps^2} \Big)^p \abs{x-a_i}^{-p}\abs{x-a_i}^{p\be} dx   \notag\\
    & \simeq -\Big(\log\frac{1}{\eps}\Big)^{-p} 2\int_{\eps^2}^\eps  \Big(\log\frac{s}{\eps^2}\Big)^p s^{p(\be-1)+N-1} ds   \notag\\
    & \lesssim -\Big(\log\frac{1}{\eps}\Big)^{-p} \int_{\eps/2}^\eps  \Big(\log\frac{s}{\eps^2}\Big)^p s^{p(\be-1)+N-1} ds  \notag\\
    & \lesssim -\Big(\log\frac{1}{\eps}\Big)^{-p}\Big(\log \frac{1}{2\eps}\Big)^{p} \int_{\eps/2}^\eps s^{p(\be-1)+N-1}ds \notag\\
    & \lesssim -\eps^{(p-N)\big( \frac{p}{2(p-1)}-1 \big)}.   \notag
\end{align}
Similarly
\begin{align}
    J_{2,\eps} & = \sum_{i=1}^2\int_{B_{\eps^{1/2}}(a_i)\setminus B_\eps(a_i)} \big(\mu_2 V_2 - \eps_0 V_1\big)\abs{u_\eps}^p dx  \notag\\
    & < -\frac{\eps_0}{2} \sum_{i=1}^2\int_{B_{\eps^{1/2}}(a_i)\setminus B_\eps(a_i)} V_1 \theta_\eps^p\phi^p dx   \notag\\
    & \lesssim -\Big(\log\frac{1}{\eps}\Big)^{-p} \sum_{i=1}^2\int_{B_{\eps^{1/2}}(a_i)\setminus B_\eps(a_i)}  \Big(\log\frac{\eps}{\abs{x-a_i}^2}\Big)^p \abs{x-a_i}^{-p}\abs{x-a_i}^{p\be} dx   \notag\\
     & \simeq -\Big(\log\frac{1}{\eps}\Big)^{-p} 2\int_\eps^{\eps^{1/2}} \Big( \log\frac{\eps}{s^2} \Big)^p s^{p(\be-1)+N-1} ds       \notag\\
    & \lesssim -\Big(\log\frac{1}{\eps}\Big)^{-p} \int_\eps^{\eps^{2/3}} \Big( \log\frac{\eps}{s^2} \Big)^p s^{p(\be-1)+N-1} ds      \notag\\
    & \lesssim -\Big(\log\frac{1}{\eps}\Big)^{-p}\Big(\log\frac{1}{\eps^{1/3}}\Big)^{p} \int_\eps^{\eps^{2/3}} s^{p(\be-1)+N-1}ds      \notag\\
    & \lesssim -\eps^{(p-N)\big( \frac{p}{2(p-1)}-1 \big)}.   \notag
\end{align}
Combining the above two estimates, we get that 
\begin{equation}
    J_\eps = J_{1,\eps} + J_{2,\eps} < -C_2 \eps^{(p-N)\big( \frac{p}{2(p-1)}-1\big)},   \label{ec:estimJeps}
\end{equation}
where $C_2$ is a positive constant, independent of $\eps$.
By \eqref{ec:eq23}, \eqref{ec:estimIeps} and \eqref{ec:estimJeps} we obtain
\begin{align}
    L[u_\eps] & = I_\eps + J_\eps   \notag\\
    & <  C_1 \Big(\log\frac{1}{\eps}\Big)^{-2} \eps^{(p-N)\big( \frac{p}{2(p-1)}-1\big)} -C_2 \eps^{(p-N)\big( \frac{p}{2(p-1)}-1\big)}  \notag\\
    & = \eps^{(p-N)\big( \frac{p}{2(p-1)}-1\big)} \Big( C_1\Big(\log\frac{1}{\eps}\Big)^{-2} - C_2\Big) \stackrel{\eps\to 0}\longrightarrow 0_-.  \label{ec:contradictiefin}
\end{align}
Clearly, inequality \eqref{ec:contradictiefin} provides a contradiction with the assumption \eqref{ec:contradictie}. The proof of Theorem \ref{th2} is finished.


\section*{Appendix: Proof of Proposition 2.2}

Recall that, by \eqref{ec:potentialfinal},
\begin{multline} 
	V    = \frac{p-1}{4}\Bigg(\frac{N-p}{p-1}\Bigg)^p \frac{\abs{a_1-a_2}^2\abs{x-a}^{p-2}}{\abs{x-a_1}^p\abs{x-a_2}^p} 
  +   \frac{p-2}{2}\Bigg( \frac{N-p}{p-1} \Bigg)^{p-1} \times \\
  \times   \frac{\abs{x-a}^{p-4}}{\abs{x-a_1}^p\abs{x-a_2}^p}\Big[\abs{x-a_1}^2\abs{x-a_2}^2 - \big( (x-a_1)\cdot(x-a_2) \big)^2\Big]. 
\end{multline}
It is clear, due to Cauchy-Schwarz inequality, that $V\geq 0$ for $p\in(2,N)$. Also, for $p=2$, $V=\mu_1 V_1$, which is positive.\\
Let $p\in(1,2)$. After some computations, we obtain V in the following form
\begin{align}
     V &= \frac{p-1}{8}\Bigg(\frac{N-p}{p-1}\Bigg)^{p-1}\frac{\abs
    {x-a}^{p-4}}{\abs{x-a_1}^p\abs{x-a_2}^p} \Bigg[ \frac{N-p}{2}\Big(\abs{x-a_1}^4+\abs{x-a_2}^4\Big)    \notag\\   
    &  + \Big(N+3p-8\Big)\abs{x-a_1}^2\abs{x-a_2}^2 + 2\Big(4-p-N\Big)\Big((x-a_1)\cdot(x-a_2)\Big)^2 \Bigg]    \notag
\end{align}
Applying the Cauchy-Schwarz inequality, we get
\begin{align}
    V & \geq \frac{p-1}{8}\Bigg(\frac{N-p}{p-1}\Bigg)^{p-1}\frac{\abs
    {x-a}^{p-4}}{\abs{x-a_1}^p\abs{x-a_2}^p}\times   \notag\\
    & \times \Bigg[ \frac{N-p}{2}\Big(\abs{x-a_1}^4+\abs{x-a_2}^4\Big) - (N-p)\abs{x-a_1}^2\abs{x-a_2}^2 \Bigg]   \notag\\
    & = \frac{p-1}{16}\Bigg(\frac{N-p}{p-1}\Bigg)^p\frac{\abs{x-a}^{p-4}}{\abs{x-a_1}^p\abs{x-a_2}^p} \Big(\abs{x-a_1}^2- \abs{x-a_2}^2\Big)^2.    \notag
\end{align}
It is clear that the above quantity is positive for any $1<p<2$. The proof is done. 
\hfill $\square$

\newpage

\end{document}